
\documentclass[journal]{IEEEtran}
\ifCLASSINFOpdf
\else
\fi

\usepackage[colorlinks=true]{hyperref}
\usepackage{amsfonts,amsmath,amssymb,graphicx,color}
\usepackage{amsthm}
\usepackage[numbers, sort&compress ]{natbib}

\usepackage{soul}

\newcommand{\change}[1]{{\color{black}#1}}
\newcommand{\ew}[1]{{\color{black}#1}}
\newcommand{\rb}[1]{{\color{black}#1}}

\newcommand{\ab}[1]{{\color{black}#1}}

\usepackage{pifont}
\usepackage{ulem}
\usepackage{booktabs}
\usepackage{array}
\usepackage{arydshln}
\setlength\dashlinedash{0.2pt}
\setlength\dashlinegap{1.5pt}
\setlength\arrayrulewidth{0.3pt}

\newtheorem{thm}{Theorem}[section]
\newtheorem{assum}[thm]{Assumption}
\newtheorem{lem}[thm]{Lemma}
\newtheorem{cor}[thm]{Corollary}
\numberwithin{equation}{section}


\hyphenation{op-tical net-works semi-conduc-tor}

\begin{document}
%
\title{On the Convergence of Nested Decentralized Gradient Methods with Multiple Consensus and Gradient Steps}
%
%
%

\author{Albert~S.~Berahas,
        Raghu~Bollapragada,
        and~Ermin~Wei,~\IEEEmembership{Member,~IEEE}
\thanks{A. S. Berahas was with the Department
of Industrial and Operations Engineering, University of Michigan, Ann Arbor,
MI, 48019, USA. (e-mail: \url{albertberahas@gmail.com}).}
\thanks{R. Bollapragada was with the Operations Research and Industrial Engineering Program, 
The University of Texas at Austin, Austin, TX, 78712, USA. (e-mail: \url{raghu.bollapragada@utexas.edu})}
\thanks{E. Wei was with the Department of Electrical and Computer  Engineering, Department of Industrial Engineering and Management Sciences
Northwestern University, Evanston, IL, 60208, USA. (e-mail: \url{ermin.wei@northwestern.edu})}
}

%
%

\markboth{June 2021}%
{Shell \MakeLowercase{\textit{et al.}}: Bare Demo of IEEEtran.cls for IEEE Journals}
%



\maketitle

\begin{abstract}
In this paper, we consider minimizing a sum of local convex objective functions in a distributed setting, where the cost of communication and/or computation can be expensive. We extend and generalize the analysis for a class of nested gradient-based distributed algorithms (NEAR-DGD, \cite{balCC}) to account for multiple gradient steps at every iteration. We show the effect of performing multiple gradient steps on the rate of convergence and on the size of the neighborhood of convergence, and prove R-Linear convergence to the exact solution with a fixed number of gradient steps and increasing number of consensus steps. We test the performance of the generalized method on quadratic functions and show the effect of multiple consensus and gradient steps in terms of iterations, number of gradient evaluations, number of communications and cost.
\end{abstract}

\begin{IEEEkeywords}
Distributed Optimization, Communication, Optimization Algorithms, Network Optimization.
\end{IEEEkeywords}

%
\IEEEpeerreviewmaketitle

\section{Introduction}
\label{sec:intro}
The focus of this paper is on designing and analyzing distributed optimization algorithms that employ multiple agents in a connected network with the collective goal of minimizing
\begin{align}		\label{eq:prob}
	\min_{x\in \mathbb{R}^p}\quad h(x) =  \sum_{i=1}^n f_i(x),
\end{align}
where convex function $h: \mathbb{R}^p \rightarrow \mathbb{R}$ is the {\it global objective function}, convex function $f_i: \mathbb{R}^p \rightarrow \mathbb{R}$ for each \textcolor{black}{$i\in \{1,2,...,n \}$} is the {\it local objective function} available only to agent $i$, and vector $x\in \mathbb{R}^p$ is the decision variable that the agents are optimizing cooperatively. Such problems arise in a plethora of applications such as wireless sensor networks \cite{ling2010decentralized,predd2006distributed,giannakisAdHoc,zhao2002information}, smart grids \cite{giannakis2013monitoring,kekatos2013distributed}, multi-vehicle and multi-robot networks \cite{cao2013overview,ren2007information,zhou2011multirobot} 
and machine learning \cite{duchi2012,tsianos2012consensus,boyd2011distributed,zhang2015disco}, to mention a few. 

In order to optimize \eqref{eq:prob} it is natural to employ a {\it distributed optimization algorithm}, where the agents iteratively perform local {\it computations} based on a local objective function and local {\it communications}, i.e., information exchange with their one-step neighbors in the underlying network. To decouple the computation of individual agents, problem \eqref{eq:prob} is often reformulated as the  following consensus optimization problem \cite{bertsekas1989parallel,nedic2009distributed},
\begin{align}		\label{eq:cons_prob}
	\min_{x_i \in \mathbb{R}^p}&\quad \sum_{i=1}^n f_i(x_i)\\
	 \text{s.t.} &\quad  x_i = x_j, \quad \forall i, j \in \mathcal{N}_i, \nonumber
\end{align}
where $x_i \in \mathbb{R}^p$ for each agent $i\in \{1,2,...,n \}$ is a local copy of the decision variable, and  $\mathcal{N}_i$ denotes the set of (one-step) neighbors of the $i^{th}$ agent. The {\it consensus constraint} imposed in problem \eqref{eq:cons_prob} enforces that local copies of neighboring nodes are equal; assuming that the underlying network is connected, the constraint ensures that all local copies are equal and as a result problems \eqref{eq:prob} and \eqref{eq:cons_prob} are equivalent.

For compactness, we express problem \eqref{eq:cons_prob} as
\begin{align}		\label{eq:cons_prob1}
	\min_{x_i \in \mathbb{R}^p}&\quad f(\textbf{x}) = \sum_{i=1}^n f_i(x_i)\\
	\text{s.t.} & \quad (\textbf{W}\otimes I_p)\textbf{x} = \textbf{x}, \nonumber
\end{align}
where $\textbf{x} \in \mathbb{R}^{np}$ is a concatenation of all local $x_i$'s, $\textbf{W} \in \mathbb{R}^{n \times n}$ is a matrix that captures information about the underlying graph, $I_p$ is the identity matrix of dimension $p$, and the operator $\otimes$ denotes the Kronecker product operation, with $\textbf{W}\otimes I_p \in \mathbb{R}^{np \times np}$. Matrix $\textbf{W}$, known as the {\it consensus matrix}, is a symmetric, doubly-stochastic matrix with $w_{ii}>0$ and $w_{ij}>0$ ($i\neq j$) if and only if $i$ and $j$ are neighbors in the underlying communication network. This matrix has the property that $(\textbf{W}\otimes I_p) \textbf{x}=\textbf{x}$ if and only if $x_i=x_j$ for all $i$ and $j$ in the connected network, i.e., problems \eqref{eq:cons_prob} and \eqref{eq:cons_prob1} are equivalent. 
Moreover, the matrix $\textbf{W}$ has exactly one eigenvalue equal to 1 and the rest of eigenvalues have absolute values strictly less than 1. We use $\beta$, with $0<\beta<1$, to denote the second largest, in magnitude, eigenvalue of $\textbf{W}$.

In this paper, we investigate a class of first-order primal methods that perform nested communication and computation steps and that are adaptive.  Our work is closely related to a few lines of research that we delineate below:
\begin{enumerate}
	\itemsep0em
	\item \textbf{distributed first-order primal algorithms} \cite{bertsekas1989parallel,nedic2009distributed,TsitsiklisThes,yuan2016convergence,AsuChapter,NOOTQuantization,SNVStochasticGradient,ra10,loizou2016new,loizouaccelerated,kovalev2020optimal,li2018sharp,ye2020multi}: methods that use only gradient information and operate in primal space (i.e., directly on problem \eqref{eq:cons_prob1});
	\item \textbf{separated communication/computation} 
	 \cite{balCC,chen2012fast,sayed2013diffusion,MouraFastGrad,iakovidou2019nested,li2018sharp,ye2020multi}: methods that decompose the communication and computation steps and perform them sequentially;
	\item \textbf{communication efficient} \cite{chow2016expander,lan2017communication, shamir2014communication, tsianos2012communication,zhang2015disco,berahas2019nested,koloskova2019decentralized,koloskova2019decentralized_1,khirirat2020communication,chen2018lag,liu2019communication}: methods that incorporate communication considerations in the design;
	\item \textbf{exact} \cite{balCC,extra, di2016next,nedic2017achieving1, qu2017harnessing,li2017decentralized}: methods that converge to the optimal solution using a fixed \rb{steplength} on strongly convex functions;
	\item \textbf{time-varying} \cite{balCC,chen2012fast,MouraFastGrad}: methods that do not perform a fixed number of communication and/or gradient steps per iteration.
\end{enumerate}
\ab{For a more extensive literature review of the above methods see \cite{balCC,bertsekas1989parallel,extra,nedic2020distributed,yuan2016convergence} and the references therein.}

\ab{There has been a recent surge of interest by the machine learning community in Federated Learning (FL) \cite{bonawitz2019towards,pathak2020fedsplit,karimireddy2019scaffold,li2020federated,yang2019federated,stich2018local,mcmahan2017communication}, which can be viewed as a distributed optimization framework over a star graph. FL operates in a $n$-client-server setup where clients do not communicate with each other directly, rather they communicate with the server who aggregates information and send averages to the clients (i.e., every round of communication all clients (nodes) have the same information). Thus, the effective communication pattern (in the notation of this paper) is a complete graph with weights $1/n$ (where $n$ is the number of clients). FL is a special case of the distributed optimization problems considered in this paper.

It is common practice in FL to design communication efficient methods where the clients take multiple gradient steps towards minimizing local cost function before communicating to the server \cite{pathak2020fedsplit,karimireddy2019scaffold,stich2018local,mcmahan2017communication, lin2018don,zhang2016parallel,koloskova2019decentralized}. This is due to the fact that there are numerous problems that arise in machine learning where local computations are cheap relative to the cost of communication. Most of these algorithms can be viewed as a special case of the class of the nested algorithms considered in this paper. Moreover, much of the current analysis considers algorithms that employ diminishing sequences of step sizes, whereas we consider a fixed step size algorithm allowing us to prove \ab{linear} convergence rates to either an error neighborhood with a constant number of gradient steps or the exact solution with decreasing gradient steps (bounded below by one). We should note that we are unable to prove exact convergence if  more than one (but finitely many) gradient steps are employed at every iteration which is consistent with the recent results in \cite{pathak2020fedsplit,karimireddy2019scaffold, wei2017parallel}. Finally, another advantage of our framework is the flexibility of adjusting the number of computation and communication steps depending on the applications. In many applications, e.g.,   \cite{mohamed2019monte}, even the computation of inexact gradient direction can be expensive and thus may favor a method with more communication steps.}

The main innovation of this paper is to extend and \change{generalize the existing analysis for a class of nested gradient-based distributed algorithms to account for multiple gradient steps at every iteration (per round of communication).} More specifically, we focus on variants of the NEAR-DGD method proposed in \cite{balCC} and analyze a general algorithm that (potentially) takes both multiple consensus and gradient steps at every iteration. \change{The main challenge here is that with multiple gradient steps each agent makes good progress towards minimizers with respect to their local objective functions, which may be far away from the global optimal solution. We note that even if we initialize the algorithm at the global optimal solution, the iterates will first move away before they converge back.}
We show the effect (theoretically and empirically) of performing multiple gradient steps on the rate of convergence and the size of the neighborhood. Moreover, we prove $R$-Linear convergence to the exact solution for the NEAR-DGD method that employs a decreasing number of gradient steps and an increasing number of consensus steps using a constant steplength on strongly convex functions.

\medskip
The paper is organized as follows. In \ab{Section} \ref{sec:NEAR-DGD} we introduce the NEAR-DGD method with multiple consensus and gradient steps per iteration. We then provide a convergence analysis for the method in Section \ref{sec:conv_analysis}. In Section \ref{sec:num_res} we illustrate the empirical performance of the method, and in Section \ref{sec:fin_rem} we provide some concluding remarks.

\section{The NEAR-DGD Method with Multiple Consensus and Gradient Steps}
\label{sec:NEAR-DGD}

We consider an algorithm that performs multiple consensus and gradient steps at each iteration. More specifically, we analyze the generalized form of the NEAR-DGD method proposed in \cite{balCC}. The most general form of the algorithm -- which we call NEAR-DGD$^{t_c,t_g}$ -- can be expressed in terms of two operators:
\begin{itemize}
		\itemsep0em
		\item Consensus Operator: $\mathcal{W}[\textbf{x}] = \textbf{Zx}$,
		\item Gradient Operator: $\mathcal{T}[\textbf{x}] = \textbf{x} - \alpha \nabla \textbf{f}(\textbf{x})$,
\end{itemize}
where $\textbf{Z} = \textbf{W}\otimes I_p \in \mathbb{R}^{np \times np}$ and $ \nabla \textbf{f}(\textbf{x}_k) \in \mathbb{R}^{np}$ is a concatenation of the local gradients. The $k^{th}$ iterate of the NEAR-DGD$^{t_c,t_g}$ can be expressed as
\begin{align*}
	\textbf{x}_k &= \mathcal{W}^{t_c(k)}[\mathcal{T}^{t_g(k)}[\textbf{x}_{k-1}]]
\end{align*}
where 
$\mathcal{W}^{t_c(k)}[\textbf{x}]$ denotes $t_c(k)$ nested consensus operations (steps)
\begin{align*}
	\mathcal{W}^{t_c(k)}[\textbf{x}] = \underbrace{\mathcal{W}[\cdots[\mathcal{W}[\mathcal{W}}_{\text{$t_c(k)$ operations}}[x]]]\cdots],
\end{align*}
 and $\mathcal{T}^{t_g(k)}[\textbf{x}]$ denotes $t_g(k)$ nested gradient operations (steps). One can describe the iterations of the NEAR-DGD$^{t_c,t_g}$ method in terms of an intermediate variable $\textbf{y}_k  \in \mathbb{R}^{np}$ as 
\begin{gather}	
\textbf{y}_{k}  =\mathcal{T}^{t_g(k)}[\textbf{x}_k] = \textbf{x}_k -   \alpha \sum_{j=1}^{t_g(k)}\nabla \textbf{f}(\textbf{x}_k^{{j-1}}) \label{eq:twt2},\\
\textbf{x}_{k+1}  = \mathcal{W}^{t_c(k)}[\textbf{y}_k] = (\textbf{W}\otimes I_p)^{t_c(k)}\textbf{y}_k =   \textbf{Z}^{t_c(k)}\textbf{y}_k \label{eq:twt}, 
\end{gather}
where $\textbf{x}_k^j = \textbf{x}_k^{j-1} - \alpha\nabla \textbf{f}(\textbf{x}_k^{j-1}) $ for $j = 1,\dots, t_g$ with $\textbf{x}_k^0 = \textbf{x}_k \in \mathbb{R}^{np}$, and $ \nabla \textbf{f}(\textbf{x}_k^{j}) \in \mathbb{R}^{np}$ is a concatenation of the local gradients $\nabla f_i(x_{i,k}^j)$ for $i = 1,\dots, n$. The three indices ($i,k,j$) of $x_{i,k}^j$ indicate the agent index $i$, the iteration count $k$ and the gradient step index $j$. In the case where the superscript $j$ is dropped (e.g., $\textbf{x}_{k}$) this denotes the iterate after $t_c(k)$ consensus steps have been performed. Moreover, note that $\textbf{y}_k = \textbf{x}_{k}^{t_g}$.

\rb{By setting the parameters $t_c$ and $t_g$ appropriately, one can recover several methods from the literature; Table \ref{NEARDGD_special} summarizes these methods. We should note that some of the methods summarized in the table (e.g., \cite{MouraFastGrad,chen2012fast}) do not exactly fit in the NEAR-DGD$^{t_c,t_g}$ algorithmic framework, nevertheless, these methods decouple the consensus and gradient steps and perform multiple consensus and/or gradient steps.}

\begin{table*}[]
\rb{
{\footnotesize
\centering
\caption{Summary of Methods with Multiple Consensus and Gradient Steps.}
\label{NEARDGD_special}
\begin{tabular}{cccccccc}
\toprule
\textbf{Method} & $\pmb{t_c(k)}$ & $\pmb{t_g(k)}$ & \begin{tabular}[c]{@{}c@{}}\textbf{Gradient/}\\ \textbf{Functions}\end{tabular} & \textbf{Communication} & \textbf{Convergence} & \begin{tabular}[c]{@{}c@{}}\textbf{Convergence} \\ \textbf{Rate}\end{tabular} & \textbf{Reference} \\ \midrule
D-NC & $\mathcal{O}(\log k)$ & $1$ & \begin{tabular}[c]{@{}c@{}}deterministic/\\ convex\end{tabular} & full & exact & Sub-linear & \cite{MouraFastGrad} \\ \hdashline
APG-MSC & $k$ & $1$ & \begin{tabular}[c]{@{}c@{}}deterministic/\\ convex\end{tabular} & full & exact & Sub-linear & \cite{chen2012fast} \\ \hdashline
NEAR-DGD & $1$ & $1$ & \begin{tabular}[c]{@{}c@{}}deterministic/\\ strongly convex\end{tabular} & full & neighborhood & $R$-Linear & \cite{balCC} \\ \hdashline
NEAR-DGD$^{t_c}$ & $t_c$ & $1$ & \begin{tabular}[c]{@{}c@{}}deterministic/\\ strongly convex\end{tabular} & full & neighborhood & $R$-Linear & \cite{balCC} \\ \hdashline
NEAR-DGD$^+$ & $k$ & $1$ & \begin{tabular}[c]{@{}c@{}}deterministic/\\ strongly convex\end{tabular} & full & exact & $R$-Linear & \cite{balCC} \\ \hdashline
NEAR-DGD$^{t_c}$+$Q$ & $t_c$ & $1$ & \begin{tabular}[c]{@{}c@{}}deterministic/\\ strongly convex\end{tabular} & quantized & neighborhood & $R$-Linear & \cite{berahas2019nested} \\ \hdashline
NEAR-DGD$^+$+$Q$ & $k$ & $1$ & \begin{tabular}[c]{@{}c@{}}deterministic/\\ strongly convex\end{tabular} & \begin{tabular}[c]{@{}c@{}}adaptive\\ quantized\end{tabular} & exact & $R$-Linear & \cite{berahas2019nested} \\ \hdashline
NEAR-DGD$^{t_c,t_g}$ & $t_c$ & $t_g$ & \begin{tabular}[c]{@{}c@{}}deterministic/\\ strongly convex\end{tabular} & full & neighborhood & $R$-Linear & \textbf{this paper} \\ \hdashline
NEAR-DGD$^{t_c(k),t_g(k)}$ & $k$ & $\max\{ t_g - 1,1\}$ & \begin{tabular}[c]{@{}c@{}}deterministic/\\ strongly convex\end{tabular} & full & exact & $R$-Linear & \textbf{this paper} \\ \hdashline
Choco-SGD & $1$ & $1$ & \begin{tabular}[c]{@{}c@{}}stochastic/\\ strongly convex\end{tabular} & quantized & \begin{tabular}[c]{@{}c@{}}exact\\ (in expectation)\end{tabular}  & Sub-linear & \cite{koloskova2019decentralized} \\ \hdashline
Local SGD & $1$ & $t_g$ & \begin{tabular}[c]{@{}c@{}}stochastic/\\ strongly convex\end{tabular} & full & \begin{tabular}[c]{@{}c@{}}exact\\ (in expectation)\end{tabular}  & Sub-linear & \cite{stich2018local}\\ \hdashline
SG-NEAR-DGD$^{t_c}$ & $t_c$ & $1$ & \begin{tabular}[c]{@{}c@{}}stochastic/\\ strongly convex\end{tabular} & full & \begin{tabular}[c]{@{}c@{}}neighborhood\\ (in expectation)\end{tabular}  & $R$-Linear &  \cite{iakovidou2019nested}\\ \hdashline
SG-NEAR-DGD$^+$ & $k$ & $1$ & \begin{tabular}[c]{@{}c@{}}stochastic/\\ strongly convex\end{tabular} & full & \begin{tabular}[c]{@{}c@{}}neighborhood\\ (in expectation)\end{tabular} & $R$-Linear & \cite{iakovidou2019nested} \\ \bottomrule
\end{tabular}
}}
\end{table*}

\section{Convergence Analysis}
\label{sec:conv_analysis}

In this section, we analyze the NEAR-DGD$^{t_c,t_g}$ method with both multiple communication and computation steps. We begin by assuming that the method takes a fixed number of consensus ($t_c$) and gradient ($t_g$) steps per iteration. We then generalize the results to the case where the number of steps vary at every iteration. We make the following assumptions that are standard in the distributed optimization literature \cite{balCC,nedic2009distributed}.
\begin{assum}\label{assm:Lip}
	 Each local objective function $f_i$  has $L_i$-Lipschitz continuous gradients. {We define $L = max_i L_i$.}
\end{assum}
\begin{assum}\label{assm:Strong}
	Each local objective function $f_i$ is $\mu_i$-strongly convex. 
\end{assum}

Moreover, for both the theoretical and numerical results presented in this paper, we initialize the \rb{iterate $x_{i,0} = s_0$}
for each \textcolor{black}{$i\in \{1,2,...,n \}$}, \change{where $s_0 \in \mathbb{R}^p$ is any vector}; however, we should note that our theoretical results would hold with different initialization. \change{Our analysis depends on the constant $0<\beta<1$; the second largest, in magnitude, eigenvalue of the consensus matrix $\textbf{W}$. }

For notational convenience, we introduce the following quantities that are used in the analysis 
\begin{gather*}		
	\bar{x}_k = \frac{1}{n}\sum_{i=1}^n x_{i,k}, \quad \bar{y}_k = \frac{1}{n}\sum_{i=1}^n y_{i,k},  \\
	 g_k =  \sum_{j=1}^{t_g\ab{(k)}}\frac{1}{n}\sum_{i=1}^n \nabla f_i(x^{{j-1}}_{i,k}),  \quad \bar{g}_k =  \sum_{j=1}^{t_g\ab{(k)}}\frac{1}{n}\sum_{i=1}^n \nabla f_i(\rb{\widehat{x}^{{j-1}}_{k}}),
\end{gather*}
where 
\begin{equation}
\label{eq:xbars}
\rb{\widehat{x}_k^j = \widehat{x}_k^{j-1} - \alpha \frac{1}{n}\sum_{i=1}^{n}\nabla {f_i}(\widehat{x}_k^{j-1})  \quad \mbox{for } j = 1,\dots, t_g}
\end{equation}
and $\rb{\widehat{x}_k^0} = \bar{x}_k$. The vectors  $\bar x_k \in \mathbb{R}^p$ and $\bar y_k \in \mathbb{R}^p$ correspond to the average of local estimates, $g_k \in\mathbb{R}^p$ represents the average of local gradients at the current local estimates, and  $\bar g_k \in\mathbb{R}^p$ is the average gradient at $\bar x_k$. \rb{The vectors $\widehat{x}_k^j \in \mathbb{R}^p$ represent the iterates produced by taking gradient steps on the average objective function $\bar{f}(x) = \frac{1}{n}\sum_{i=1}^{n} f_i(x)$ starting from $\bar{x}_k$. We should note that these iterates are never explicitly computed and are solely defined for analysis purposes.}

We note that the gradient steps \ref{eq:twt2} in the NEAR-DGD$^{t_c,t_g}$ method can be viewed as $t_g(k)$ gradient iterations on the following unconstrained problem 
\begin{align}	\label{eq:tw_penalty}
\min_{x_i \in \mathbb{R}^p} \quad\sum_{i=1}^n f_i(x_i).
\end{align}
We use this observation to bound the iterates $\textbf{x}_k$ and $\textbf{y}_k$.

\begin{lem} 		\label{lem:twt_bound_iterates}
	\textbf{(Bounded iterates)} Suppose Assumptions \ref{assm:Lip} and \ref{assm:Strong} hold, and let the steplength satisfy
	$\alpha < \frac{1}{L}.$
	Then, 
	the iterates generated by the NEAR-DGD$^{t_c,t_g}$ method \eqref{eq:twt2}-\eqref{eq:twt} are bounded, namely,
	\begin{align*}
	\| \textbf{x}_k \| \leq D, \quad {\| \textbf{y}_k \| \leq D},
	\end{align*}
	where {$D = \| \textbf{y}_0 - \textbf{u}^\star\| + \frac{\nu + 4}{\nu}\|\textbf{u}^\star \|$}, $\textbf{u}^\star = [u_1^\star;u_2^\star;...;u_n^\star] \in \mathbb{R}^{np}$, $u_i^\star = \arg\min_{u_i}f_i(u_i)$, $\textbf{u}^\star$ is the optimal solution of \eqref{eq:tw_penalty}, $\nu = 2\alpha \gamma$, $\gamma = \min_{i} \gamma_i$ and $\gamma_i = \frac{\mu_i L_i}{\mu_i + L_i}$ (for $1 \leq i \leq n$). Moreover, the average iterates defined in \eqref{eq:xbars} are also bounded, namely,
	\begin{align*}
	\|\rb{\widehat{x}_{k}^j} - x^\star\| \leq \rb{\widehat{D}} \quad \forall  j = 0, \cdots, t_g,
	\end{align*}
	where $\rb{\widehat{D}} = \|x^\star\| + \frac{D}{\sqrt{n}}$ and $x^\star$ is the optimal solution of \eqref{eq:cons_prob1}.
\end{lem}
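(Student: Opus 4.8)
The plan is to exploit the two structural facts that make the iteration stable: the gradient operator $\mathcal{T}$ contracts toward the separable minimizer $\textbf{u}^\star$, while the consensus operator $\mathcal{W}$ is non-expansive. First I would establish the contraction of a single gradient step. Since $\nabla f_i(u_i^\star)=0$ and each $f_i$ is $\mu_i$-strongly convex with $L_i$-Lipschitz gradient, the standard co-coercivity inequality together with the step-size restriction $\alpha<\frac{1}{L}\le\frac{2}{\mu_i+L_i}$ yields $\|x-\alpha\nabla f_i(x)-u_i^\star\|^2\le(1-2\alpha\gamma_i)\|x-u_i^\star\|^2$ with $\gamma_i=\frac{\mu_iL_i}{\mu_i+L_i}$. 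Stacking over $i$ and using $\gamma=\min_i\gamma_i$ gives $\|\mathcal{T}[\textbf{x}]-\textbf{u}^\star\|\le\sqrt{1-\nu}\,\|\textbf{x}-\textbf{u}^\star\|$ with $\nu=2\alpha\gamma<1$, so any number $t_g(k)\ge1$ of nested gradient steps contracts by at least the factor $\sqrt{1-\nu}$.

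Next I would record that $\mathcal{W}$ is non-expansive: because $\textbf{W}$ is symmetric and doubly stochastic its eigenvalues lie in $[-1,1]$, hence $\|\textbf{Z}^{t_c}\|\le1$ for every $t_c$. The crucial (and only) subtlety is that $\textbf{u}^\star$ is generically not a consensus vector, so it is not a fixed point of $\mathcal{W}$; writing $\textbf{Z}^{t_c}\textbf{y}-\textbf{u}^\star=\textbf{Z}^{t_c}(\textbf{y}-\textbf{u}^\star)+(\textbf{Z}^{t_c}-I)\textbf{u}^\star$ and bounding $\|(\textbf{Z}^{t_c}-I)\textbf{u}^\star\|\le2\|\textbf{u}^\star\|$ produces an additive perturbation rather than a pure contraction.

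Combining these, and tracking $a_k:=\|\textbf{y}_k-\textbf{u}^\star\|$, I obtain the one-step recursion $a_{k+1}\le\sqrt{1-\nu}\,a_k+2\|\textbf{u}^\star\|$. Unrolling this geometric recursion gives $a_k\le a_0+\frac{2\|\textbf{u}^\star\|}{1-\sqrt{1-\nu}}$, and rationalizing $\frac{2}{1-\sqrt{1-\nu}}=\frac{2(1+\sqrt{1-\nu})}{\nu}\le\frac{4}{\nu}$ yields $\|\textbf{y}_k\|\le a_k+\|\textbf{u}^\star\|\le\|\textbf{y}_0-\textbf{u}^\star\|+\frac{\nu+4}{\nu}\|\textbf{u}^\star\|=D$. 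The bound $\|\textbf{x}_k\|\le D$ then follows for $k\ge1$ from $\textbf{x}_k=\textbf{Z}^{t_c(k-1)}\textbf{y}_{k-1}$ and the non-expansiveness of $\textbf{Z}$, the base iterate being handled directly.

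Finally, for the averaged quantities I would note that $\bar f=\frac{1}{n}\sum_if_i$ is strongly convex and smooth with smoothness constant at most $L$, so under $\alpha<\frac{1}{L}$ the gradient map $x\mapsto x-\alpha\nabla\bar f(x)$ is non-expansive toward its minimizer $x^\star$; hence $\|\widehat{x}_k^j-x^\star\|\le\|\widehat{x}_k^0-x^\star\|=\|\bar x_k-x^\star\|$ for all $j$. A Cauchy--Schwarz estimate gives $\|\bar x_k\|\le\frac{1}{\sqrt n}\|\textbf{x}_k\|\le\frac{D}{\sqrt n}$, and the triangle inequality finishes with $\|\widehat x_k^j-x^\star\|\le\|x^\star\|+\frac{D}{\sqrt n}=\widehat D$. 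The main obstacle is the second paragraph: recognizing that consensus does not contract toward $\textbf{u}^\star$ forces the contraction-plus-constant recursion, and matching the stated constant $\frac{\nu+4}{\nu}$ requires tracking $\textbf{y}_k$ (rather than $\textbf{x}_k$) together with the rationalization bound above.
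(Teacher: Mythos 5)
Your proposal is correct and follows essentially the same route as the paper's proof: the per-agent Nesterov contraction toward $u_i^\star$ stacked into $\|\mathcal{T}[\textbf{x}]-\textbf{u}^\star\|\le\sqrt{1-\nu}\,\|\textbf{x}-\textbf{u}^\star\|$, the decomposition handling the fact that $\textbf{u}^\star$ is not fixed by $\textbf{Z}^{t_c}$ (with $\|I-\textbf{Z}^{t_c}\|\le 2$), the resulting contraction-plus-constant recursion on $\|\textbf{y}_k-\textbf{u}^\star\|$ unrolled and rationalized to the $4/\nu$ constant, and the gradient-descent contraction on $\bar f$ plus Cauchy--Schwarz for the $\widehat{x}_k^j$ bound. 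The only (immaterial) deviations are that you drop the factor $\sqrt{(1-\nu)^{t_g}}$ multiplying the constant term in the recursion, which the paper retains before bounding it by the same quantity, and your treatment of $\textbf{x}_0$ is no less rigorous than the paper's, which likewise only covers $k\ge 1$ via non-expansiveness of $\textbf{Z}^{t_c}$.
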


\begin{proof}
Using standard results for the gradient descent method \cite[Theorem 2.1.5, Chapter 2]{nesterov2013introductory}, and noting that $\alpha < \frac{1}{L} \leq \frac{2}{\mu_i + L_i}$, which is the necessary condition on the steplength, we have that for any $i \in \{1,2,...,n\}$
	\begin{align*}
	\Bigg\| x_{i,k} - \alpha \sum_{j=1}^{t_g}\nabla {f}_i(x^{{j-1}}_{i,k}) - {u_i}^\star \Bigg\| \leq \sqrt{(1 - 2\alpha \gamma_i)^{t_g}} \|x_{i,k} - {u_i}^\star\|.
	\end{align*}
	From this, we have,
	\begin{align}
	\Bigg\| \textbf{x}_k -  &  \alpha \sum_{j=1}^{t_g}\nabla \textbf{f}(\textbf{x}^{{j-1}}_k) - \textbf{u}^\star \Bigg\| \nonumber\\
	&= \sqrt{\sum_{i=1}^n \Bigg\| x_{i,k} - \alpha \sum_{j={1}}^{t_g}\nabla {f}_i(x^{{j-1}}_{i,k}) - {u_i}^\star \Bigg\|^2}\nonumber\\
	& \leq \sqrt{\sum_{i=1}^n (1 - 2\alpha \gamma_i)^{t_g} \|x_{i,k} - {u_i}^\star\|^2}\nonumber\\
	& \leq \sqrt{ (1- \nu)^{t_g}} \| \textbf{x}_k - \textbf{u}^\star \| \label{eq:dgdt_gradd}.
	\end{align}
	where the last inequality follows from the definition of $\nu$.
	
\rb{Using the definitions of $\nu$, $\textbf{y}_{k+1}$ and \eqref{eq:dgdt_gradd}, we have 
\begin{align*}
	\| \ab{\textbf{y}}&\ab{_{k}} - \textbf{u}^\star \| \\
		& = \Bigg\| \textbf{x}_k - \alpha \sum_{j=1}^{t_g}\nabla \textbf{f}(\textbf{x}^{{j-1}}_k) - \textbf{u}^\star \Bigg\| \\
		& \leq \sqrt{ (1- \nu)^{t_g}}  \| \textbf{x}_k - \textbf{u}^\star \|\\
		& = \sqrt{ (1- \nu)^{t_g}}  \| \textbf{Z}^{t_c}\ab{\textbf{y}_{k-1}} - \textbf{u}^\star \|\\
		& \leq \sqrt{ (1- \nu)^{t_g}} [ \| \textbf{Z}^{t_c}\| \| \ab{\textbf{y}_{k-1}} - \textbf{u}^\star \| + \|I - \textbf{Z}^{t_c}\| \|\textbf{u}^\star \| ].
\end{align*}
The eigenvalues of $\textbf{Z}^{t_c}$ are the same as those of the matrix $\textbf{W}^{t_c}$. The spectrum property of $\textbf{W}$ guarantees that the magnitude of each eigenvalue is upper bounded by $1$. Hence, $\|\textbf{Z}^{t_c}\| \leq 1$ and $\|I - \textbf{Z}^{t_c}\| \leq 2$ for all $t_c$. The above relation implies that
\begin{align*}
	\| \ab{\textbf{y}_{k}}& - \textbf{u}^\star \| \leq \sqrt{ (1- \nu)^{t_g}} \| \ab{\textbf{y}_{k-1}} - \textbf{u}^\star \| + 2 \sqrt{ (1- \nu)^{t_g}}\|\textbf{u}^\star \|.
\end{align*}
Recursive application of the above relation gives,
\begin{align*}
	\| \ab{\textbf{y}}&\ab{_{k}} - \textbf{u}^\star \| \\
		& \leq (1 - \nu)^{\ab{k}t_g/2}\| \textbf{y}_0 - \textbf{u}^\star \| + 2 \sum_{l=0}^k(1 - \nu)^{\ab{l}t_g/2}\|\textbf{u}^\star \| \\
		& \leq \| \textbf{y}_0 - \textbf{u}^\star \| + \frac{2\sqrt{(1-\nu)^{t_g}}}{1 - \sqrt{(1-\nu)^{t_g}}}\|\textbf{u}^\star \| \\
		& \leq \| \textbf{y}_0 - \textbf{u}^\star \| + \frac{2\sqrt{(1-\nu)}}{1 - \sqrt{(1-\nu)}}\|\textbf{u}^\star \| \\
		& \leq \| \textbf{y}_0 - \textbf{u}^\star \| + \frac{4}{\nu}\|\textbf{u}^\star \|,
\end{align*}
where the second inequality is due to converting a finite sum to an infinite sum, the third inequality is due to the fact that $0< 1-\nu < 1$, and the last inequality is due to using an upper bound on the fraction in the second term.
Thus, we bound the iterate as
\begin{align*}
	\| \ab{\textbf{y}_{k}} \| &\leq \| \ab{\textbf{y}_{k}} - \textbf{u}^\star \| + \|\textbf{u}^\star \| \\
	& \leq \| \textbf{y}_{0} - \textbf{u}^\star \| + \frac{\nu + 4}{\nu} \|\textbf{u}^\star \|.
\end{align*}
We now show that the same result is true for the iterates. Using the definition of $\textbf{x}_k$ \eqref{eq:twt}
\begin{align*}
	\| \textbf{x}_{k+1} \| & = \| \textbf{Z}^{t_c}\ab{\textbf{y}_{k}} \| \\
		& \leq \| \textbf{Z}^{t_c}\| \|\ab{\textbf{y}_{k}} \| \\
		& \leq \|\ab{\textbf{y}_{k}} \| \leq D.
\end{align*}
}

Notice that  the average iterates defined in \eqref{eq:xbars} are a sequence of gradient descent steps on the function $\bar{f}(x) = \frac{1}{n}\sum_{i=1}^{n} f_i(x)$. {Under Assumptions \ref{assm:Lip} and \ref{assm:Strong}, it can be shown that the function $\bar{f}(x)$ is $\mu_{\bar{f}}$-strongly convex and has $L_{\bar{f}}$-Lipschitz continuous gradients\footnote{Note, $\mu_{\bar{f}}  = \frac{1}{n} \sum_{i=1}^n \mu_i$, and $L_{\bar{f}} = \frac{1}{n} \sum_{i=1}^n L_i$.}}. Therefore, following the same procedure as above, we have
	\begin{align*}
	\|\rb{\widehat{x}_k^j} - x^\star\| &\leq \sqrt{(1 - \bar{\nu})^j}\|\rb{\widehat{x}_k^0} - x^\star \| \\
	& \leq  \|x^\star \| + \|\rb{\widehat{x}_k^0}\|\\
	& \leq \|x^\star \| + \frac{\|\textbf{x}_k\|}{\sqrt{n}}\\
	& \leq \|x^\star \| + \frac{D}{\sqrt{n}},
	\end{align*} 
	where $\bar{\nu} = 2\alpha\frac{\mu_{\bar{f}}L_{\bar{f}}}{\mu_{\bar{f}} + L_{\bar{f}}}$.
\end{proof}

Lemma \ref{lem:twt_bound_iterates} shows that the iterates generated by the NEAR-DGD$^{t_c,t_g}$ method, where the number of consensus and gradient steps are fixed (and possibly greater than 1), are bounded. These results can be extended to show that the iterates generated by the NEAR-DGD$^{t_c,t_g}$ method with varying number of  consensus and gradient steps at every iteration (i.e., $t_c(k)$, $t_g(k)$) are also bounded. 

For notational convenience, we define the quantity
\begin{align*}	
	\eta = 1 + \alpha L,
\end{align*}
which is bounded from above and below by $2$ and $1$, respectively, as $0 < \alpha \leq 1/L$. %
Before we proceed, we provide a technical lemma that bounds the deviation between the individual gradients and the average gradient at any iterate within a compact set.
\begin{lem}\label{assm:bounded}
	 Suppose Assumptions \ref{assm:Lip} and \ref{assm:Strong} hold. Then, for any given $x \in \mathcal{D}(x^\star)$, there exists a constant $M\geq0$ such that
	 \begin{equation}
	 \left\|\nabla f_i(x) - \frac{1}{n}\sum_{j=1}^{n}\nabla f_j(x)\right\| \leq M,
	 \end{equation}
	 \ab{for $1 \leq i \leq n$,} where $x^\star$ is the optimal solution of \eqref{eq:cons_prob1}, $\mathcal{D}(x^\star) = \{z: \|z - x^\star\| \leq \widehat{D} \}$, $ \widehat{D}$ is defined in Lemma \ref{lem:twt_bound_iterates}, and $M = 2L\widehat{D} +\sum_{i=1}^{n} \left\|\nabla f_i(x^*)\right\|$.
\end{lem}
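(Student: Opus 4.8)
The plan is to exploit the $L$-Lipschitz continuity of the local gradients together with the first-order optimality condition at $x^\star$, the common optimal value of the consensus problem. First I would recenter every gradient at $x^\star$ by adding and subtracting $\nabla f_i(x^\star)$ and $\frac{1}{n}\sum_{j=1}^{n}\nabla f_j(x^\star)$ inside the norm, and then apply the triangle inequality to split $\nabla f_i(x) - \frac{1}{n}\sum_{j=1}^{n}\nabla f_j(x)$ into three pieces: the deviation $\nabla f_i(x) - \nabla f_i(x^\star)$, the averaged deviation $\frac{1}{n}\sum_{j=1}^{n}[\nabla f_j(x) - \nabla f_j(x^\star)]$, and a residual term evaluated entirely at $x^\star$.

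Next, since $x \in \mathcal{D}(x^\star)$ means $\|x - x^\star\| \leq \widehat{D}$, Assumption \ref{assm:Lip} gives $\|\nabla f_i(x) - \nabla f_i(x^\star)\| \leq L\|x - x^\star\| \leq L\widehat{D}$ for each $i$, and the same bound holds for each summand of the averaged term, so that average is also at most $L\widehat{D}$ (after applying $\|\frac{1}{n}\sum_j \cdot\| \leq \frac{1}{n}\sum_j \|\cdot\|$). These two contributions together account for the $2L\widehat{D}$ part of $M$.

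For the residual term, I would invoke that problem \eqref{eq:cons_prob1} is equivalent to \eqref{eq:prob}, so at the optimum all local copies coincide with the common minimizer $x^\star$ and the first-order condition yields $\sum_{j=1}^{n}\nabla f_j(x^\star) = 0$, hence $\frac{1}{n}\sum_{j=1}^{n}\nabla f_j(x^\star) = 0$. The residual therefore collapses to $\|\nabla f_i(x^\star)\|$, which I would bound crudely by $\sum_{i=1}^{n}\|\nabla f_i(x^\star)\|$. Combining the three estimates gives the uniform (in $i$) bound $M = 2L\widehat{D} + \sum_{i=1}^{n}\|\nabla f_i(x^\star)\|$.

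I do not anticipate a genuine obstacle, since the statement is essentially a boundedness estimate on the compact ball $\mathcal{D}(x^\star)$; strong convexity (Assumption \ref{assm:Strong}) enters only to guarantee that $x^\star$ is well-defined and unique. The one point meriting care is the residual term: bounding $\|\nabla f_i(x^\star) - \frac{1}{n}\sum_{j=1}^{n}\nabla f_j(x^\star)\|$ directly by the triangle inequality would yield the slightly larger constant $\|\nabla f_i(x^\star)\| + \frac{1}{n}\sum_{j=1}^{n}\|\nabla f_j(x^\star)\|$, so it is the use of the optimality condition $\sum_{j=1}^{n}\nabla f_j(x^\star) = 0$ that produces the clean stated form of $M$.
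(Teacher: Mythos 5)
Your proposal is correct and follows essentially the same argument as the paper: the same recentering at $x^\star$ via adding and subtracting $\nabla f_i(x^\star)$ and $\frac{1}{n}\sum_{j=1}^{n}\nabla f_j(x^\star)$, the same Lipschitz bounds $L\widehat{D}$ on the two deviation terms, and the same use of the optimality condition $\sum_{j=1}^{n}\nabla f_j(x^\star)=0$ to collapse the residual to $\|\nabla f_i(x^\star)\| \leq \sum_{i=1}^{n}\|\nabla f_i(x^\star)\|$. No gaps; this matches the paper's proof.
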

\begin{proof}
	We have, 
	\begin{align*}
		\Bigg\|\nabla f_i(x)& - \frac{1}{n}\sum_{j=1}^{n}\nabla f_j(x)\Bigg\| \\
		&\ab{=\Bigg\|\nabla f_i(x) - \nabla f_i(x^\star) + \nabla f_i(x^\star) - \frac{1}{n}\sum_{j=1}^{n}\nabla f_j(x)} \\
		&\qquad \ab{+ \frac{1}{n}\sum_{j=1}^{n}\nabla f_j(x^\star) - \frac{1}{n}\sum_{j=1}^{n}\nabla f_j(x^\star)\Bigg\|}\\
		&\leq  \left\|\nabla f_i(x) - \nabla f_i(x^\star)\right\| + \left\|\nabla f_i(x^*)\right\|\\
		& \qquad +\Bigg\|\frac{1}{n}\sum_{j=1}^{n}\nabla f_j(x) - \frac{1}{n}\sum_{j=1}^{n}\nabla f_j(x^\star)\Bigg\| \\
		&\leq L_i \widehat{D} + \left\|\nabla f_i(x^\star)\right\| +L_{\bar{f}}\widehat{D} \\
		&\leq 2L\widehat{D} +\sum_{i=1}^{n} \left\|\nabla f_i(x^\star)\right\|  = M,
	\end{align*}
	 where the third inequality is due to Assumption  \ref{assm:Lip} and Lemma \ref{lem:twt_bound_iterates} and $\sum_{j=1}^{n}\nabla f_j(x^\star) = 0$. 	
\end{proof}	
The result of Lemma \ref{assm:bounded} \ew{is independent of our algorithm and} is valid for any finite set of functions.  

\begin{lem} 	\label{lem:twt_bound_dev_mean}
	\textbf{(Bounded deviation from mean)} Suppose Assumptions \ref{assm:Lip} and \ref{assm:Strong} hold. Then, 
	the total deviation of each agent's estimates ($x_{i,k}$ and $y_{i,k}$) from the mean are bounded, namely,
	
\noindent
	\begin{gather}		\label{eq:twt_lem2_p1}
	{\| x_{i,k} - \bar{x}_k\| \leq \beta^{{t_c}} D },\\
	\| y_{i,k} - \bar{y}_k\| \leq \beta^{{t_c}} D  + 2D \label{eq:twt_lem2_p4}.
	\end{gather}
	
\noindent
	for all $k=1,2,\ldots$ and $1 \leq i \leq n$. Moreover,
	
\noindent
	\begin{gather}
	{\| x_{i,k}^{j} - \rb{\widehat{x}_k^j}\| \leq \eta^{j} \beta^{{t_c}} D + \alpha M \frac{\eta^{j} -1}{\eta-1}} \label{eq:twt_lem2_p2},\\
	{\| g_k - \bar{g}_k \| \leq \beta^{{t_c}} D L\frac{\eta^{t_g} - 1}{\eta-1}  + M\left(\frac{\eta^{t_g} - 1}{\eta-1} - t_g \right)}  	\label{eq:twt_lem2_p3},
	\end{gather}
	
\noindent
	for all $k=1,2,\ldots$, $j=0,\cdots,t_g$ and $1 \leq i \leq n$. 
\end{lem}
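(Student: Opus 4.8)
The plan is to establish the four inequalities in the logical order \eqref{eq:twt_lem2_p1}, \eqref{eq:twt_lem2_p2}, \eqref{eq:twt_lem2_p3}, \eqref{eq:twt_lem2_p4} (rather than the order stated), leaning throughout on Lemma \ref{lem:twt_bound_iterates} for the uniform bounds $\|\textbf{x}_k\|,\|\textbf{y}_k\|\le D$ and on Lemma \ref{assm:bounded} for the gradient-deviation constant $M$. For the consensus bound \eqref{eq:twt_lem2_p1}, I would use $\textbf{x}_k = \textbf{Z}^{t_c}\textbf{y}_{k-1}$ together with double stochasticity of $\textbf{W}$, which implies consensus preserves the average, so that $\mathbf{1}\otimes\bar{x}_k = (\tfrac{1}{n}\mathbf{1}\mathbf{1}^\top\otimes I_p)\textbf{y}_{k-1}$. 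Subtracting gives $\textbf{x}_k - \mathbf{1}\otimes\bar{x}_k = (\textbf{Z}^{t_c} - \tfrac{1}{n}\mathbf{1}\mathbf{1}^\top\otimes I_p)\textbf{y}_{k-1}$, and since this operator agrees with $\textbf{W}^{t_c}$ off the all-ones eigenspace its norm is exactly $\beta^{t_c}$; hence $\|x_{i,k}-\bar{x}_k\|\le\beta^{t_c}\|\textbf{y}_{k-1}\|\le\beta^{t_c}D$.

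The central step is the inductive proof of \eqref{eq:twt_lem2_p2} on the gradient-step index $j$. The base case $j=0$ is exactly \eqref{eq:twt_lem2_p1}, since $x_{i,k}^0=x_{i,k}$ and $\widehat{x}_k^0=\bar{x}_k$. For the inductive step I would subtract the two recursions $x_{i,k}^j = x_{i,k}^{j-1}-\alpha\nabla f_i(x_{i,k}^{j-1})$ and $\widehat{x}_k^j = \widehat{x}_k^{j-1}-\tfrac{\alpha}{n}\sum_l\nabla f_l(\widehat{x}_k^{j-1})$, then add and subtract $\nabla f_i(\widehat{x}_k^{j-1})$ to split the gradient difference into a Lipschitz term, bounded by $L\|x_{i,k}^{j-1}-\widehat{x}_k^{j-1}\|$ via Assumption \ref{assm:Lip}, and a deviation term $\|\nabla f_i(\widehat{x}_k^{j-1})-\tfrac{1}{n}\sum_l\nabla f_l(\widehat{x}_k^{j-1})\|\le M$ via Lemma \ref{assm:bounded}. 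This yields the scalar recursion $\|x_{i,k}^j-\widehat{x}_k^j\|\le\eta\,\|x_{i,k}^{j-1}-\widehat{x}_k^{j-1}\|+\alpha M$ with $\eta=1+\alpha L$, and unrolling it produces the closed form $\eta^j\beta^{t_c}D+\alpha M\frac{\eta^j-1}{\eta-1}$.

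Granting \eqref{eq:twt_lem2_p2}, the bound \eqref{eq:twt_lem2_p3} follows by summation: since $g_k-\bar{g}_k=\sum_{j=1}^{t_g}\tfrac{1}{n}\sum_i(\nabla f_i(x_{i,k}^{j-1})-\nabla f_i(\widehat{x}_k^{j-1}))$, the triangle inequality and Lipschitz continuity give $\|g_k-\bar{g}_k\|\le L\sum_{j=1}^{t_g}\|x_{i,k}^{j-1}-\widehat{x}_k^{j-1}\|$; substituting \eqref{eq:twt_lem2_p2} and summing the two geometric series, the first term becomes $\beta^{t_c}DL\frac{\eta^{t_g}-1}{\eta-1}$ and the second becomes $\frac{\alpha L M}{\eta-1}\bigl(\frac{\eta^{t_g}-1}{\eta-1}-t_g\bigr)$, which collapses to the claimed $M\bigl(\frac{\eta^{t_g}-1}{\eta-1}-t_g\bigr)$ upon using the identity $\alpha L=\eta-1$. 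For \eqref{eq:twt_lem2_p4} I would write $y_{i,k}-\bar{y}_k=(x_{i,k}-\bar{x}_k)+\bigl[(y_{i,k}-x_{i,k})-(\bar{y}_k-\bar{x}_k)\bigr]$; the first term is at most $\beta^{t_c}D$ by \eqref{eq:twt_lem2_p1}, while the bracket is the $i$-th block of the orthogonal projection $(I-\tfrac{1}{n}\mathbf{1}\mathbf{1}^\top\otimes I_p)(\textbf{y}_k-\textbf{x}_k)$, whose norm is at most $\|\textbf{y}_k-\textbf{x}_k\|\le\|\textbf{y}_k\|+\|\textbf{x}_k\|\le 2D$.

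The main obstacle is the induction for \eqref{eq:twt_lem2_p2}: the decomposition must be arranged so that precisely one Lipschitz term and one bounded-deviation term survive, and one must verify that the argument $\widehat{x}_k^{j-1}$ lies in the compact set $\mathcal{D}(x^\star)$ for every $j$ so that Lemma \ref{assm:bounded} is applicable, which is guaranteed by the second bound in Lemma \ref{lem:twt_bound_iterates}. The remaining work—the geometric-series bookkeeping and the cancellation $\alpha L=\eta-1$ in \eqref{eq:twt_lem2_p3}, and the projection estimate in \eqref{eq:twt_lem2_p4}—is routine once the recursion and the consensus contraction are in place.
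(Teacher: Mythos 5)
Your proposal is correct and follows essentially the same route as the paper's proof: the consensus-contraction bound via $\|(\textbf{W}^{t_c}-\tfrac{1}{n}\mathbf{1}\mathbf{1}^\top)\otimes I\|\leq\beta^{t_c}$ for \eqref{eq:twt_lem2_p1}, the induction yielding the recursion $\eta\|\cdot\|+\alpha M$ for \eqref{eq:twt_lem2_p2}, and the geometric-series summation with the cancellation $\alpha L=\eta-1$ for \eqref{eq:twt_lem2_p3}. The only cosmetic difference is in \eqref{eq:twt_lem2_p4}, where you bound the non-consensus part by projecting $\textbf{y}_k-\textbf{x}_k$ onto the disagreement subspace while the paper compares $y_{i,k}$ with $x_{i,k+1}=$ ($i$-th block of $\textbf{Z}^{t_c}\textbf{y}_k$); both yield the same $2D$ term, and your explicit check that $\widehat{x}_k^{j}\in\mathcal{D}(x^\star)$ (needed to invoke Lemma \ref{assm:bounded}) is a point the paper leaves implicit.
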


\begin{proof}
	Consider,
	\begin{align*}
	\|x_{i,k} - \bar{x}_k\| & = \|x_{i,k} - \ab{\bar{y}_{k-1}}\| \\
	&\leq \left\|\textbf{x}_k - \frac{1}{n}\left((1_n1_n^T)\otimes I \right) \ab{\textbf{y}_{k-1}}\right\| \\
	&=\left\|(\textbf{W}^{t_c} \otimes I)\ab{\textbf{y}_{k-1}}  - \frac{1}{n}\left((1_n1_n^T)\otimes I\right) \ab{\textbf{y}_{k-1}} \right\| \\
	&\leq \left\|\left(\textbf{W}^{t_c}  - \frac{1}{n}\left((1_n1_n^T) \right)\otimes I \right)\right\|\| \ab{\textbf{y}_{k-1}} \| \\
	&\leq \beta^{t_c} \| \ab{\textbf{y}_{k-1}} \| \leq \beta^{{t_c}} D,
	\end{align*}
	where the first equality is due to the fact that $\bar{x}_k = \textbf{Z}^{t_c}\ab{\bar{y}_{k-1} = \bar{y}_{k-1} }$ and the last inequality is due to Lemma \ref{lem:twt_bound_iterates}.
	
	For the local $y_{i,k}$ iterates in  \eqref{eq:twt_lem2_p4}, consider
	\begin{align*}
	\|y_{i,k} - \bar{y}_k\| & \leq \| \ab{x_{i,k+1}} - \bar{y}_k\| + \| y_{i,k} - \ab{x_{i,k+1}}\|\\
	& = \| \ab{x_{i,k+1}} - \ab{\bar{x}_{k+1}}\| + \| y_{i,k} - \ab{x_{i,k+1}}\|\\
	& \leq \beta^{{t_c}} D + \| \textbf{y}_k - \ab{\textbf{x}_{k+1}} \| \\
	&=  \beta^{{t_c}} D + \left\| \textbf{y}_k - \left( \textbf{W}^{t_c} \otimes I\right)\textbf{y}_k \right\| \\
	&\leq  \beta^{{t_c}} D + \left\| \left(I - \textbf{W}^{t_c} \otimes I\right) \right\| \|\textbf{y}_k \| \\
	&\leq  \beta^{{t_c}} D + 2D,
	\end{align*}
	where the second inequality is due to \eqref{eq:twt_lem2_p1} and the last inequality is due to Lemma \ref{lem:twt_bound_iterates}.
	
	We prove result \eqref{eq:twt_lem2_p2} by induction. The statement is true for $j=0$. Now, assume that it is true for some $j=l$, and consider,
	\begin{align*}
     \| x_{i,k}^{l+1} &- \rb{\widehat{x}_k^{l+1}}\| \\
     &= \left\| x_{i,k}^{l} - \rb{\widehat{x}_k^{l}} - \alpha \left(\nabla f_i(x_{i,k}^{l}) - \frac{1}{n}\sum_{j=1}^{n}\nabla f_j(\rb{\widehat{x}_k^{l}})\right)\right\| \\
	 & \leq \| x_{i,k}^{l} - \rb{\widehat{x}_k^{l}}\| + \alpha \|\nabla f_i(x_{i,k}^{l}) - \nabla f_i(\rb{\widehat{x}_k^{l}})\| \\
	 &\qquad  + \alpha\left\|\nabla f_i(\rb{\widehat{x}_k^{l}}) - \frac{1}{n}\sum_{j=1}^{n}\nabla f_j(\rb{\widehat{x}_k^{l}})\right\|\\
	 & \leq \| x_{i,k}^{l} - \rb{\widehat{x}_k^{l}}\| + \alpha L_i\| x_{i,k}^{l} - \rb{\widehat{x}_k^{l}}\| + \alpha M\\
	 & \leq (1 + \alpha L) \| x_{i,k}^{l} - \rb{\widehat{x}_k^{l}}\| + \alpha M\\
	 &\leq \eta^{l+1} \beta^{{t_c}} D + \alpha M \eta\frac{\eta^{l} -1}{\eta-1} + \alpha M \\
	 &= \eta^{l+1}  \beta^{{t_c}} D + \alpha M \frac{\eta^{l+1}  -1}{\eta-1} 
	\end{align*}
	where  
	the first equality is due to the definitions given in \eqref{eq:twt2} and \eqref{eq:xbars}, the second inequality is due to Assumptions \ref{assm:Lip} and \ab{Lemma \ref{assm:bounded}}, the third inequality is due to the definition of $L$ and the last inequality is due to the definition of $\eta$ and \eqref{eq:twt_lem2_p1}.
	
	To establish \eqref{eq:twt_lem2_p3}, we have
	\begin{align*}
	\|g_k - \bar{g}_k\|  &= \left\|\sum_{j=1}^{t_g}\frac{1}{n} \sum_{i=1}^{n} \left(\nabla f_i(x^{{j-1}}_{i,k}) -  \nabla f_i(\rb{\widehat{x}_k^{j-1}})\right) \right\| \\
	&\leq \sum_{j=1}^{t_g}\frac{1}{n} \sum_{i=1}^{n} L_i\left(\eta^{j-1}\beta^{{t_c}}D + \alpha M \frac{\eta^{j-1} -1}{\eta-1} \right) \\
	&  \leq  LD\beta^{{t_c}}\sum_{j=1}^{t_g}\eta^{j-1}  + M\sum_{j=1}^{t_g}(\eta^{j-1} - 1) \\
	&= \beta^{{t_c}} D L \frac{\eta^{t_g} - 1}{\eta - 1} + M \left(\frac{\eta^{t_g} - 1}{\eta - 1} - t_g\right),
	\end{align*}
	where the first inequality is due to Assumption \ref{assm:Lip}, the definition of $\eta$, and \eqref{eq:twt_lem2_p2} and the second inequality is due to the definition of $L$ and $\eta$\ab{, and the fact that $\alpha L \leq 1$}.
\end{proof}

Lemma \ref{lem:twt_bound_dev_mean} shows that the distance between the local iterates $x_{i,k}$ and $y_{i,k}$ are bounded from their means. Similar to the results in Lemma \ref{lem:twt_bound_iterates}, these results can be extended to account for a varying number of  consensus and gradient steps at every iteration since these results are for each iteration $k$.

We now investigate the optimization error of the NEAR-DGD$^{t_c,t_g}$ method. To this end, we make use of an observation made in \cite[Section V]{balCC}. Namely,
\begin{align}	\label{eq:twt_errors}
	\bar{y}_{k+1} = \bar{y}_k - \alpha g_k,
\end{align}
can be viewed as {a sequence of $t_g(k)$ inexact gradient descent steps} on the following unconstrained problem
\begin{align}		\label{eq:prob_bar}
	\min_{x\in \mathbb{R}^p} \bar{f}(x) = \frac{1}{n} \sum_{i=1}^n f_i (x),
\end{align}
 where $\bar{g}_k$ is the  {sequence of $t_g$ exact gradient descent steps}. 

 {We should mention that contrary to the analysis in \cite{balCC}, in this work we consider the error instead of the square of the error, and as such we are able to obtain tighter bounds. }

\begin{thm} \label{thm:twt_bound_dist_min}
	\textbf{(Bounded distance to minimum)} Suppose Assumptions \ref{assm:Lip} and \ref{assm:Strong} hold, and let the steplength satisfy
	$\alpha \leq \min \left \{ {\frac{1}{L}, c_4} \right \}$,
	where 
	$c_4 = \frac{2}{\mu_{\bar{f}} +L_{\bar{f}}}$. Then, 
	the iterates generated by the NEAR-DGD$^{t_c,t_g}$ method \eqref{eq:twt2}-\eqref{eq:twt} satisfy 
\begin{align*}
		\| \bar{x}_{k} - x^\star \| &\leq c_1^{kt_g} \| \bar{x}_{0} - x^\star\| + \frac{c_3\beta ^{t_c}(\eta^{t_g} - 1)}{(1-c_1^{t_g})} \\
		&\quad + c_5\frac{\eta^{t_g} - 1 - t_g(\eta-1)}{1-c_1^{t_g}},
\end{align*}
	where
\begin{align*}
	&{c_1 = \sqrt{1 - \alpha c_2}}, \;\; c_2 = \frac{2\mu_{\bar{f}} L_{\bar{f}}}{\mu_{\bar{f}} + L_{\bar{f}}}, \\
	& \;\; c_3 = \frac{\alpha{ D L}}{\eta - 1}, \;\;  c_5 = \frac{\alpha{ M}}{\eta - 1},
\end{align*}
$x^\star$ is the optimal solution of \eqref{eq:cons_prob1}, $D$ is defined in Lemma \ref{lem:twt_bound_iterates} and $\eta = 1 + \alpha L$. 
\end{thm}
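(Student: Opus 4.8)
The plan is to collapse the theorem into a scalar one-step recursion of the form $a_{k+1} \le c_1^{t_g} a_k + b$, where $a_k = \|\bar{x}_k - x^\star\|$, and then unroll it as a geometric series. The guiding idea is to compare the true averaged dynamics against a fictitious \emph{centralized} run of $t_g$ exact gradient-descent steps on the averaged function $\bar{f}$. Averaging the gradient step \eqref{eq:twt2} and using that $\textbf{Z}^{t_c}$ is doubly stochastic (so consensus preserves the mean, $\bar{x}_{k+1} = \bar{y}_k$) yields the exact identity $\bar{x}_{k+1} = \bar{x}_k - \alpha g_k$, which is the observation underlying \eqref{eq:twt_errors}. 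On the other hand, the centralized iterates \eqref{eq:xbars} started at $\widehat{x}_k^0 = \bar{x}_k$ satisfy $\widehat{x}_k^{t_g} = \bar{x}_k - \alpha \bar{g}_k$. Subtracting the two gives the decomposition
\begin{equation*}
\bar{x}_{k+1} - x^\star = \left(\widehat{x}_k^{t_g} - x^\star\right) + \alpha\left(\bar{g}_k - g_k\right),
\end{equation*}
so that $\|\bar{x}_{k+1} - x^\star\| \le \|\widehat{x}_k^{t_g} - x^\star\| + \alpha\|\bar{g}_k - g_k\|$ by the triangle inequality.

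Next I would bound the two terms separately. Since $\bar{f}$ is $\mu_{\bar{f}}$-strongly convex with $L_{\bar{f}}$-Lipschitz gradient, $x^\star$ is its minimizer, and the steplength obeys $\alpha \le c_4 = \frac{2}{\mu_{\bar{f}} + L_{\bar{f}}}$, the standard gradient-descent contraction (Nesterov, Thm.\ 2.1.15 -- the same estimate already used at the end of the proof of Lemma \ref{lem:twt_bound_iterates}) gives $\|\widehat{x}_k^{j} - x^\star\| \le \sqrt{1 - \alpha c_2}\,\|\widehat{x}_k^{j-1} - x^\star\|$ at every inner step $j$, with $c_2 = \frac{2\mu_{\bar{f}} L_{\bar{f}}}{\mu_{\bar{f}} + L_{\bar{f}}}$. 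Applying this $t_g$ times and using $\widehat{x}_k^0 = \bar{x}_k$ yields $\|\widehat{x}_k^{t_g} - x^\star\| \le c_1^{t_g}\|\bar{x}_k - x^\star\|$ with $c_1 = \sqrt{1 - \alpha c_2}$. For the second term, the gradient-discrepancy bound \eqref{eq:twt_lem2_p3} of Lemma \ref{lem:twt_bound_dev_mean}, multiplied by $\alpha$ and regrouped with $c_3 = \frac{\alpha D L}{\eta - 1}$ and $c_5 = \frac{\alpha M}{\eta - 1}$, gives
\begin{equation*}
\alpha\|\bar{g}_k - g_k\| \le c_3 \beta^{t_c}\left(\eta^{t_g} - 1\right) + c_5\left(\eta^{t_g} - 1 - t_g(\eta - 1)\right).
\end{equation*}

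Combining the two estimates produces the recursion $a_{k+1} \le c_1^{t_g} a_k + b$ with constant perturbation $b = c_3 \beta^{t_c}(\eta^{t_g}-1) + c_5(\eta^{t_g}-1-t_g(\eta-1))$. Since $\alpha c_2 > 0$ forces $c_1 < 1$, unrolling from $k=0$ and bounding the finite geometric sum $\sum_{l=0}^{k-1} c_1^{l t_g}$ by the infinite sum $\frac{1}{1 - c_1^{t_g}}$ gives $\|\bar{x}_k - x^\star\| \le c_1^{k t_g}\|\bar{x}_0 - x^\star\| + \frac{b}{1-c_1^{t_g}}$, which is the claimed bound after splitting $b$ into its two summands. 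The main obstacle is conceptual rather than computational: identifying the correct comparison object, namely that the averaged multi-gradient-step update equals $t_g$ exact steps on $\bar{f}$ plus a controllable consensus-plus-linearization error, so that the clean single-function contraction can be invoked. Once that decomposition is in place every remaining ingredient is furnished by Lemmas \ref{lem:twt_bound_iterates}--\ref{lem:twt_bound_dev_mean}, and the only genuine care needed is in matching the two steplength requirements ($\alpha \le c_4$ for the contraction and $\alpha \le 1/L$ for the supporting lemmas) and in the bookkeeping that folds \eqref{eq:twt_lem2_p3} into the constants $c_3$ and $c_5$.
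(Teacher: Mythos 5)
Your proposal is correct and follows essentially the same route as the paper: the identity $\bar{x}_{k+1} = \bar{x}_k - \alpha g_k$, the split into $\|\bar{x}_k - x^\star - \alpha\bar{g}_k\| + \alpha\|\bar{g}_k - g_k\|$ (your $\widehat{x}_k^{t_g} - x^\star$ is exactly the paper's first term), the Nesterov contraction \cite[Theorem 2.1.15]{nesterov2013introductory} giving the factor $c_1^{t_g}$, the bound \eqref{eq:twt_lem2_p3} folded into $c_3$ and $c_5$, and the geometric unrolling with $\sum_{l} c_1^{lt_g} \le \frac{1}{1-c_1^{t_g}}$. The only cosmetic difference is that you apply the one-step contraction $t_g$ times along the fictitious iterates $\widehat{x}_k^j$ rather than invoking the $t_g$-step estimate directly, which is the same argument.
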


\begin{proof} Using the definitions of the $\bar{x}_k$, $g_k$,  \eqref{eq:twt_errors} \rb{and the fact that $\textbf{W}$ is doubly-stochastic}, we have 
\begin{align}		
	\| \bar{x}_{k+1} - x^\star \| &\leq \| \bar{x}_{k} - x^\star - \alpha \bar{g}_k\| + \alpha\| \bar{g}_k - {g}_k \|. \label{eq:twt_thm1}
\end{align}
The result of Lemma \ref{lem:twt_bound_dev_mean} bounds the quantity $\| \bar{g}_k - {g}_k \|$. 

Consider the first term on the right hand side of \eqref{eq:twt_thm1}, and observe that this is precisely the distance to optimality after performing $t_g$ gradient steps on the function $\bar{f}$. Therefore, by \cite[Theorem 2.1.15, Chapter 2]{nesterov2013introductory}, we have
\begin{align}		\label{eq:neardgdt_thm_bound_distance}
	\| \bar{x}_{k} - x^\star - \alpha \bar{g}_k\| & \leq \sqrt{(1 - \alpha c_2)^{t_g}}\|  \bar{x}_{k} - x^\star \|.
\end{align}
Combining \eqref{eq:twt_thm1}, \eqref{eq:neardgdt_thm_bound_distance} and using \eqref{eq:twt_lem2_p3},
\begin{align}			\label{eq:twt_onestep}
	\| \bar{x}_{k+1} - x^\star \| & \leq \sqrt{(1 - \alpha c_2)^{t_g}}\|  \bar{x}_{k} - x^\star \| + \alpha  L D {\beta^{{t_c}}\frac{\eta^{t_g} - 1}{\eta - 1}} \nonumber \\
	& \qquad + \alpha M \left(\frac{\eta^{t_g} - 1}{\eta - 1} - t_g\right)
\end{align}

Recursive application of \eqref{eq:twt_onestep}, and using the definitions of $c_1$ and $c_3$ yields
\begin{align*}
	\| \bar{x}_{k} - x^\star \| &\leq c_1^{kt_g} \| \bar{x}_{0} - x^\star\| + \frac{c_3\beta ^{t_c}(\eta^{t_g} - 1)}{(1-c_1^{t_g})}\\
	& \qquad + c_5\frac{\eta^{t_g} - 1 - t_g(\eta-1)}{1-c_1^{t_g}},
\end{align*}
which concludes the proof.
\end{proof}

Theorem \ref{thm:twt_bound_dist_min} shows that the average of the iterates generated by the NEAR-DGD$^{t_c,t_g}$ converge to a neighborhood of the optimal solution whose size is defined by the steplength, the second largest eigenvalue of $\textbf{W}$, the number of consensus steps and the number of gradient steps. We observe that as the number of gradient steps $t_g$ increase, the rate constant $c_1^{t_g}$  in the first term of the right hand side decreases, thereby increasing the speed of convergence to the neighborhood. The second and third terms on the right hand side represent the size of this neighborhood. As the number of gradient steps increases, the numerators of these terms increases at geometric rate without any bound and the denominator also increases (but is bounded above by 1), and so the size of the neighborhood increases. Thus, there is a clear trade-off between the speed of convergence and the size of the neighborhood, with respect to the number of gradient steps taken. \ab{On the other hand, as the number of consensus steps $t_c$ increase, the neighborhood of convergence decreases and the rate is not affected. Table \ref{thm_summary} summarizes the results for different $t_g$ and $t_c$.} \change{We observe that the rate depends on the constant $c_1$ which can be bounded using the bound on $\alpha$ by $\sqrt{\frac{1 - \kappa}{1 + \kappa}}$  where $\kappa = \frac{\mu_{\bar{f}}}{L_{\bar{f}}}$. Therefore, the dependence on the condition number is similar to that of gradient methods in the centralized setting.}  \ab{We should also note that in the case of federated learning, where the equivalence is a complete graph, we have $\beta=0$. Therefore, the neighborhood term does not depend on the consensus steps and so it suffices to choose $t_c=1$.}

\begin{table}[]
\ab{
{\footnotesize
\centering
\caption{Summary of Results of Theorem \ref{thm:twt_bound_dist_min}. (See Theorem \ref{thm:twt_bound_dist_min} for the definitions of all constants.)}
\label{thm_summary}
\begin{tabular}{ccccc}
\toprule
$\pmb{t_c(k)}$ & $\pmb{t_g(k)}$ & \textbf{Rate} & \textbf{Neighborhood} &  \textbf{Reference} \\ \midrule
$t_c$ & $0$ & $0$ & $\|\bar{x}_0 - x^\star\|$ &  \textbf{Theorem \ref{thm:twt_bound_dist_min}}\\ \hdashline
$0$ & $t_g$ & $c_1^{t_g}$ & ${O}((c_3+c_5)\eta^{t_g})$ & \textbf{Theorem \ref{thm:twt_bound_dist_min}}\\ \hdashline
$1$ & $1$ & $c_1$ & ${O}(c_3\beta\eta)$ & \cite[Theorem 5.3]{balCC}\\ \hdashline
$t_c$ & $1$ & $c_1$ & ${O}(c_3\beta^{t_c}\eta)$ & \textbf{Theorem \ref{thm:twt_bound_dist_min}}\\ \hdashline
$1$ & $t_g$ & $c_1^{t_g}$ & ${O}((c_3\beta+c_5)\eta^{t_g})$ & \textbf{Theorem \ref{thm:twt_bound_dist_min}}\\ \hdashline
$t_c$ & $t_g$ & $c_1^{t_g}$ & ${O}((c_3\beta^{t_c}+c_5)\eta^{t_g})$ & \textbf{Theorem \ref{thm:twt_bound_dist_min}}\\ \bottomrule
\end{tabular}
}}
\end{table}

We now provide a convergence result for the local agent estimates of the NEAR-DGD$^{t_c,t_g}$ method.

\begin{cor}
\label{cor:twt_bound_dist_min} \textbf{(Local agent convergence)} Suppose Assumptions \ref{assm:Lip}-\ref{assm:Strong} hold, and let the steplength satisfy
	$\alpha \leq \min \left \{ {\frac{1}{L}, c_4} \right \}$.
	Then, 
	for $k=0,1,\dots$
\begin{align*}
	\| x_{i,k} - x^\star \| &\leq c_1^{kt_g} \| x_0 - x^\star\| + \beta^{t_c}\delta \\
		& \qquad +  c_5\frac{\eta^{t_g} - 1 - t_g(\eta-1)}{1-c_1^{t_g}},\\
	\| y_{i,k} - x^\star \| &\leq c_1^{\ab{(k+1)}t_g} \| x_0 - x^\star\| + \beta^{t_c}\delta \\ 
	& \qquad +  c_5\frac{\eta^{t_g} - 1 - t_g(\eta-1)}{1-c_1^{t_g}} + 2D, 
\end{align*}
where $c_1$, $c_3$, $c_4$ and $c_5$ are given in Theorem \ref{thm:twt_bound_dist_min},  $\eta = 1 + \alpha L$, $D$ is defined in Lemma \ref{lem:twt_bound_iterates} and $\delta = \left(\frac{c_3\left( \eta^{t_g} -1\right)}{1-c_1^{t_g}}+ D\right)>0$.
\end{cor}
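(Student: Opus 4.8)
The plan is to reduce both bounds to the triangle inequality, splitting the distance of each local estimate to $x^\star$ into a deviation-from-mean term controlled by Lemma \ref{lem:twt_bound_dev_mean} and a distance-of-the-mean-to-$x^\star$ term controlled by Theorem \ref{thm:twt_bound_dist_min}. No fundamentally new estimate is needed; the work is in assembling the two previous results and regrouping constants.

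First I would handle the $x_{i,k}$ bound by writing $\|x_{i,k} - x^\star\| \le \|x_{i,k} - \bar{x}_k\| + \|\bar{x}_k - x^\star\|$. I would bound the first summand by $\beta^{t_c} D$ using \eqref{eq:twt_lem2_p1}, and the second by the full expression from Theorem \ref{thm:twt_bound_dist_min}. Since the initialization $x_{i,0} = s_0$ forces $\bar{x}_0 = x_0$, the leading term becomes $c_1^{kt_g}\|x_0 - x^\star\|$. I would then collect the two $\beta^{t_c}$-proportional contributions—the $\beta^{t_c}D$ from the deviation bound and the $c_3 \beta^{t_c}(\eta^{t_g}-1)/(1-c_1^{t_g})$ from the mean bound—into the single factor $\beta^{t_c}\delta$ with $\delta = c_3(\eta^{t_g}-1)/(1-c_1^{t_g}) + D$, while the $c_5$-term carries over unchanged, producing exactly the stated inequality.

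For the $y_{i,k}$ bound I would proceed identically, using $\|y_{i,k} - x^\star\| \le \|y_{i,k} - \bar{y}_k\| + \|\bar{y}_k - x^\star\|$ and bounding the first summand by $\beta^{t_c}D + 2D$ via \eqref{eq:twt_lem2_p4}. The key observation is that, because $\textbf{W}$ is doubly-stochastic, the consensus step preserves the mean, so $\bar{y}_k = \bar{x}_{k+1}$; applying Theorem \ref{thm:twt_bound_dist_min} at index $k+1$ therefore replaces the geometric factor by $c_1^{(k+1)t_g}$. Regrouping the $\beta^{t_c}$ terms into $\beta^{t_c}\delta$ as before and retaining the extra $2D$ from the deviation bound gives the second claimed inequality.

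The derivation is essentially mechanical, so I expect no genuine obstacle. The only points that require care are the identification $\bar{y}_k = \bar{x}_{k+1}$ (mean preservation under the doubly-stochastic consensus operator), the use of the initialization to rewrite $\|\bar{x}_0 - x^\star\|$ as $\|x_0 - x^\star\|$, and the bookkeeping that merges the deviation constant $D$ with the neighborhood constant $c_3(\eta^{t_g}-1)/(1-c_1^{t_g})$ into the single quantity $\delta$.
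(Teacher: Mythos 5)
Your proposal is correct and takes essentially the same route as the paper's own proof: the same triangle-inequality decomposition into a deviation-from-mean term (bounded via Lemma \ref{lem:twt_bound_dev_mean}) and a mean-to-optimum term (bounded via Theorem \ref{thm:twt_bound_dist_min}), the same identification $\bar{y}_k = \bar{x}_{k+1}$ for the $y_{i,k}$ bound, and the same regrouping of the $\beta^{t_c}$ contributions into $\delta$. Your only addition is the explicit remark that the common initialization forces $\bar{x}_0 = x_0$, which the paper leaves implicit.
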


\begin{proof} \rb{Using the results from Lemma \ref{lem:twt_bound_dev_mean} and Theorem \ref{thm:twt_bound_dist_min},
\begin{align*}
	\| x_{i,k} - x^\star \| &\leq \| \bar{x}_k - x^\star \| + \| x_{i,k} - \bar{x}_k\| \\
		& \leq  c_1^{kt_g} \| \bar{x}_{0} - x^\star\| + \frac{c_3\beta ^{t_c}(\eta^{t_g} - 1)}{(1-c_1^{t_g})}\\
	& \qquad + c_5\frac{\eta^{t_g} - 1 - t_g(\eta-1)}{1-c_1^{t_g}} + \beta^{{t_c}} D,
\end{align*}
and
\begin{align*}
	\| y_{i,k} - x^\star \| &\leq \| \bar{y}_{k} - x^\star \| + \| y_{i,k} - \bar{y}_{k}\| \\
		&= \| \ab{\bar{x}_{k+1}} - x^\star \| + \| y_{i,k} - \bar{y}_{k}\| \\
		& \leq  c_1^{\ab{(k+1)}t_g} \| \bar{x}_{0} - x^\star\| + \frac{c_3\beta ^{t_c}(\eta^{t_g} - 1)}{(1-c_1^{t_g})}\\
	& \qquad + c_5\frac{\eta^{t_g} - 1 - t_g(\eta-1)}{1-c_1^{t_g}} + \beta^{{t_c}} D  + 2D.
\end{align*}
Using the definition of $\delta$ completes the proof.}
\end{proof}

Similar to the analysis of the NEAR-DGD$^+$ method \cite{balCC}, and under the same conditions as in Theorem \ref{thm:twt_bound_dist_min}, one can show that for any increasing sequence \ab{(of integers)} of consensus steps $\{t_c(k)\}_k$ and decreasing sequence \ab{(of integers) bounded by $1$} of gradient steps $\{t_g(k)\}_k$ the iterates produced by the NEAR-DGD$^{t_c,t_g}$ method converge to $x^\star$ (the optimal solution of \eqref{eq:cons_prob1}). Specifically, if
\begin{align*}
	\lim_{k \rightarrow \infty} t_c(k) \rightarrow \infty \quad \text{and} \quad\lim_{k \rightarrow \infty} t_g(k)=1,
\end{align*}
then the iterates produced by the NEAR-DGD$^{t_c,t_g}$ method converge to $x^\star$.

We now show that the iterates produced by the NEAR-DGD$^{t_c(k),t_g(k)}$ method converge to the optimal solution at an $R$-Linear rate, with appropriately chosen sequences $\{t_c(k)\}_k$ and $\{t_g(k)\}_k$.

\begin{thm} \label{thm:tw+_rlinear}
 	\textbf{(R-Linear convergence NEAR-DGD$^{t_c(k),t_g(k)}$ method)} Suppose Assumptions  \ref{assm:Lip} and \ref{assm:Strong} hold, let the steplength satisfy
 	$\alpha \leq \min \left \{ \frac{1}{L}, c_4 \right \}$, 
	 and let ${{t_c(k) =k}}$ and 
	 ${{t_g(k) =\max \{t_g(0) - k, 1\}}}$, where $t_g(k) \in \mathbb{Z}_+$. Then, 
	 the iterates generated by the NEAR-DGD$^{t_c(k),t_g(k)}$ method \eqref{eq:twt2}-\eqref{eq:twt} converge at an R-Linear rate to the solution. Namely,
 	\begin{align}
 	\| \bar{x}_k - x^\star \| \leq C \rho^k
 	\end{align}
 	for all $k=0,1,2,...$, where
 	\begin{gather*}
 	C = \max \left\{ \| \bar{x}_0 -x^\star \|, \frac{8(\hat{c}_3 + \hat{c}_5)}{(\alpha c_2)^2}\right\},  \\
	\rho = \max\left\{ \beta, \ab{\tau}, 1-\frac{\alpha c_2}{2}\right\},\\
	\rb{\tau = \max_{i=0,...,t_g(0)-1}\left\{\frac{T_{i+1}}{T_i}\right\},} \\ 
	T_k = \eta^{t_g(k)} - 1 - t_g(k)(\eta - 1),\\
	\hat{c}_3 =c_3(\eta^{t_g(0)} - 1), \quad
 	\hat{c}_5 = c_5T_0, 
 	\end{gather*}
 	$c_2$, $c_3$, $c_4$, $c_5$ and $\eta$ are given in Theorem \ref{thm:twt_bound_dist_min}.
 \end{thm}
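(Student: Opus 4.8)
The plan is to establish the claimed bound $\|\bar{x}_k - x^\star\| \leq C\rho^k$ by induction on $k$, using the one-step recursion \eqref{eq:twt_onestep} from the proof of Theorem \ref{thm:twt_bound_dist_min} as the engine. The key structural observation is that with the chosen schedule $t_c(k)=k$ and $t_g(k) = \max\{t_g(0)-k,1\}$, the one-step recursion at iteration $k$ reads schematically
\begin{align*}
\|\bar{x}_{k+1} - x^\star\| \leq c_1^{t_g(k)}\|\bar{x}_k - x^\star\| + c_3\beta^{k}(\eta^{t_g(k)}-1) + c_5 T_k,
\end{align*}
where $T_k = \eta^{t_g(k)} - 1 - t_g(k)(\eta-1)$ is exactly the third-term numerator. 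Since $t_g(k)\geq 1$ always, the contraction factor satisfies $c_1^{t_g(k)} \leq c_1 = \sqrt{1-\alpha c_2} \leq 1 - \tfrac{\alpha c_2}{2}$, which is one of the three quantities inside the $\max$ defining $\rho$. The two perturbation terms must each be shown to decay like $\rho^k$: the consensus term carries the explicit factor $\beta^k \leq \rho^k$, and the gradient term $c_5 T_k$ must be controlled using $\tau = \max_i \{T_{i+1}/T_i\}$, so that $T_k \leq T_0 \tau^k \leq T_0\rho^k$.

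\textbf{The main steps}, in order, would be: First, verify the base case $k=0$, which holds since $C \geq \|\bar{x}_0 - x^\star\|$ and $\rho^0=1$. Second, assume the bound at iteration $k$ and substitute the inductive hypothesis into the one-step recursion. Third, bound each of the three resulting terms by a multiple of $\rho^{k+1}$: the contraction term gives $c_1^{t_g(k)} C\rho^k \leq (1-\tfrac{\alpha c_2}{2})C\rho^k \leq C\rho^{k+1}$ up to accounting that must be balanced against the perturbation; the consensus term is bounded using $\beta^k(\eta^{t_g(k)}-1) \leq \hat{c}_3\beta^k/c_3$ (since $t_g(k)\leq t_g(0)$ makes $\eta^{t_g(k)}-1 \leq \eta^{t_g(0)}-1$) together with $\beta \leq \rho$; and the gradient term uses $T_k \leq T_0\tau^k$ with $\tau\leq\rho$. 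Fourth, collect terms and use the specific form of $C = \max\{\|\bar{x}_0-x^\star\|, \tfrac{8(\hat{c}_3+\hat{c}_5)}{(\alpha c_2)^2}\}$ to absorb the additive perturbations into the geometric bound. The factor $8/(\alpha c_2)^2$ is the tell-tale sign that one needs an inequality of the form $\tfrac{(\hat{c}_3+\hat{c}_5)\rho^k}{\rho - (1-\alpha c_2/2)} \leq \tfrac{C}{2}\rho^k$ or a geometric-series closure, where the gap $\rho - (1 - \alpha c_2/2) \geq \tfrac{\alpha c_2}{2}$ is bounded below, and a second factor of $\tfrac{2}{\alpha c_2}$ arises from relating $c_3,c_5$ (which scale like $\alpha/(\eta-1) = 1/L$) back through $c_2$.

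\textbf{The hard part will be} the bookkeeping in the inductive step that forces the constant $C$ to take precisely the stated value. Because the contraction factor $1-\tfrac{\alpha c_2}{2}$ and the perturbation decay rates $\beta,\tau$ are all merely $\leq\rho$ rather than equal to $\rho$, one cannot simply telescope; instead one must show that the slack between $\rho$ and the contraction factor is large enough to dominate the perturbation at every step simultaneously. The cleanest route is to prove the stronger inductive claim and choose $C$ so that $(1-\tfrac{\alpha c_2}{2})C + (\hat{c}_3+\hat{c}_5) \leq C\rho$, i.e. $C \geq \tfrac{\hat{c}_3+\hat{c}_5}{\rho - (1-\alpha c_2/2)}$; bounding the denominator below by $\tfrac{\alpha c_2}{2}$ would give $C \geq \tfrac{2(\hat{c}_3+\hat{c}_5)}{\alpha c_2}$, so the extra factor of $\tfrac{4}{\alpha c_2}$ in the stated constant presumably comes from also absorbing the $\beta^k(\eta^{t_g(k)}-1)/c_3 \cdot c_3 = \hat c_3$-type terms and from a looser but cleaner estimate. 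I would manage this by carrying a single combined perturbation bound $P_k \leq (\hat{c}_3+\hat{c}_5)\rho^k$ and verifying the one-line recursion $C(1-\tfrac{\alpha c_2}{2})\rho^k + P_k \leq C\rho^{k+1}$ directly, reducing the entire argument to a single scalar inequality in $C$.
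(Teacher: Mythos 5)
Your induction skeleton matches the paper's proof: base case from the definition of $C$, the one-step recursion \eqref{eq:twt_onestep} with $t_c(k)=k$, the bound $T_k \leq T_0\tau^k$, the relaxation $\eta^{t_g(k)}-1 \leq \eta^{t_g(0)}-1$, and $\beta^k,\tau^k \leq \rho^k$. But the closing step --- the part you yourself flag as the hard part --- fails as proposed. You relax $c_1^{t_g(k)} \leq c_1 \leq 1-\tfrac{\alpha c_2}{2}$ at the outset and then try to absorb the perturbation $\hat{c}_3+\hat{c}_5$ into the slack $\rho - \left(1-\tfrac{\alpha c_2}{2}\right)$, asserting that this slack is bounded below by $\tfrac{\alpha c_2}{2}$. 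That assertion would force $\rho \geq 1$. In reality, since $\rho$ is defined as a maximum, we have $\rho = 1-\tfrac{\alpha c_2}{2}$ exactly whenever $\beta$ and $\tau$ happen to be the smaller entries (the generic case), so the slack is zero. Your proposed closing inequality $C\left(1-\tfrac{\alpha c_2}{2}\right)\rho^k + (\hat{c}_3+\hat{c}_5)\rho^k \leq C\rho^{k+1}$ then reduces to $\hat{c}_3+\hat{c}_5 \leq 0$, which is false for every choice of $C$; no value of $C$, however large, can close the induction along this route.

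The slack the paper actually exploits lives elsewhere: between the true contraction factor $c_1 = \sqrt{1-\alpha c_2}$ and the quantity $1-\tfrac{\alpha c_2}{2}$ appearing in $\rho$. By the Taylor expansion of $\sqrt{1-x}$ (all neglected terms are negative), one has $\sqrt{1-\alpha c_2} + \tfrac{(\alpha c_2)^2}{8} \leq 1 - \tfrac{\alpha c_2}{2}$. The paper therefore keeps $c_1$ un-relaxed through the estimate, writes the perturbation as $\tfrac{\hat{c}_3\beta^k + \hat{c}_5\tau^k}{C\rho^k} \leq \tfrac{\hat{c}_3+\hat{c}_5}{C} \leq \tfrac{(\alpha c_2)^2}{8}$ using the definition of $C$, and only then concludes $c_1 + \tfrac{(\alpha c_2)^2}{8} \leq 1-\tfrac{\alpha c_2}{2} \leq \rho$. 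This quadratic gap is the sole origin of the factor $\tfrac{8}{(\alpha c_2)^2}$ in $C$; it does not come from relating $c_3,c_5$ back through $c_2$ as you speculate. Your argument becomes correct, and essentially identical to the paper's, once you postpone the relaxation of $c_1$ to the very last step and invoke this inequality in place of the nonexistent slack $\rho - \left(1-\tfrac{\alpha c_2}{2}\right)$.
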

 
\begin{proof} We first consider the term
	\begin{equation*}
	 	T_k = \eta^{t_g(k)} - 1 - t_g(k)(\eta - 1),
	\end{equation*} 
	and note that $\{T_k\}$ is a decreasing sequence for all $k \leq t_g(0) - 1$ and for any $k \geq t_g(0)$, $T_k = 0$ because of the definiton of $t_g(k)$.
	Hence, by the definition of $\tau$, we have,
	\begin{align} 	\label{eq:T_k}
		T_k \leq T_0 \tau^k.
	\end{align}	 
	We prove the result by induction. By the definition of $C$ the base case $k=0$ holds. Assume that the result is true for the $k^{th}$ iteration, and consider the $(k+1)^{th}$ iteration. Starting from \eqref{eq:twt_onestep} and using the definitions of $c_3$ and $c_5$, we have
 \begin{align*}
 \| \bar{x}_{k+1} - x^\star \| &\leq c_1^{t_g(k)} \| \bar{x}_{k} - x^\star \| + c_3\beta^{t_c(k)} (\eta^{t_g(k)} - 1) \\
 & \qquad +c_5T_k\\
 &\leq c_1^{t_g(k)} \| \bar{x}_{k} - x^\star \| + c_3\beta^{t_c(k)} (\eta^{t_g(k)} - 1) \\
 & \qquad +c_5T_0\tau^k\\
 & \leq c_1C\rho^{k} + c_3\beta^{k}(\eta^{t_g(0)} - 1) + c_5T_0\tau^k\\
 & =  C\rho^{k} \left[ c_1 + \frac{\hat{c}_3 \beta^{k} + \hat{c}_5\tau^k}{C\rho^{k}}\right]\\
 & \leq C\rho^{k}  \left[c_1 + \frac{\hat{c}_3 + \hat{c}_5}{C}\right]\\
 & \leq C\rho^{k} \left[ \sqrt{1- \alpha c_2} + \frac{(\alpha c_2)^2}{8}\right]\\
 & \leq C \rho^{k} \left[ 1- \frac{\alpha c_2}{2} \right] \\
 &\leq C\rho^{k+1},
 \end{align*}
where the second inequality is due to \eqref{eq:T_k}, the third inequality is due to \ew{the inductive hypothesis and } $\eta^{t_g(k)} \leq \eta^{t_g(0)}$ (since $\eta>1$ and $t_g(0) \geq t_g(k)$,  the first equality is by the definitions of $\hat{c}_3$ and $\hat{c}_5$, fourth inequality is due to the fact that $\rho\geq \beta$ and $\rho\geq \tau$, the fifth inequality is due to the \ew{definitions of $C$ and $c_1$}, \rb{the sixth inequality is due the Taylor expansion around $ \sqrt{1- \alpha c_2}$}, and the last inequality is due to the definition of $\rho$.
\end{proof}

Theorem \ref{thm:tw+_rlinear} illustrates that when the number of consensus steps is increased at the appropriate rate ($t_c(k)=k$) and the number of gradient steps is decreased at the appropriate rate (${{t_g(k) =\max \{t_g(0) - k, 1\}}}$, where $t_g(k) \in \mathbb{Z}_+$), then the NEAR-DGD$^{t_c(k),t_g(k)}$ method converges to the solution at an $R$-Linear rate.

\rb{We now provide a convergence result for the local agent estimates of the NEAR-DGD$^{t_c(k),t_g(k)}$ method.
\begin{cor}
	\label{cor:tw+_rlinear} \textbf{(Local agent convergence)} Suppose the conditions of Theorem \ref{thm:tw+_rlinear} are satisfied. 
	Then, 
	for $k=0,1,\dots$
	\begin{align*}
	\| x_{i,k} - x^\star \| &\leq C\rho^k + \beta^kD \\
	\| y_{i,k} - x^\star \| &\leq C\rho^{\ab{k+1}} + \beta^{\ab{k+1}} D + 2D, 
	\end{align*}
	where $C$ and $\rho$ are defined in Theorem \ref{thm:tw+_rlinear}, and $D$ is defined in Lemma \ref{lem:twt_bound_iterates}.
\end{cor}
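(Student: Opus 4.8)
The plan is to mirror the argument used for Corollary \ref{cor:twt_bound_dist_min}, combining the global convergence rate from Theorem \ref{thm:tw+_rlinear} with the per-iteration deviation bounds of Lemma \ref{lem:twt_bound_dev_mean}, now specialized to the schedule $t_c(k)=k$ and $t_g(k)=\max\{t_g(0)-k,1\}$. For the local estimate $x_{i,k}$, I would first apply the triangle inequality
\begin{equation*}
\| x_{i,k} - x^\star \| \leq \| \bar{x}_k - x^\star \| + \| x_{i,k} - \bar{x}_k\|.
\end{equation*}
The first term is controlled directly by Theorem \ref{thm:tw+_rlinear}, which gives $\| \bar{x}_k - x^\star \| \leq C\rho^k$. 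The second term is the deviation from the mean, bounded in \eqref{eq:twt_lem2_p1} of Lemma \ref{lem:twt_bound_dev_mean} by $\beta^{t_c} D$; substituting the consensus count dictated by the schedule produces the stated $\beta^{k} D$ contribution, and summing the two pieces yields the claimed bound.

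For the intermediate estimate $y_{i,k}$ the same decomposition applies, but I would exploit the identity $\bar{y}_k = \bar{x}_{k+1}$, which holds because $\textbf{W}$ is doubly stochastic and hence the coordinate average is invariant under the consensus operator. This gives $\| \bar{y}_k - x^\star \| = \| \bar{x}_{k+1} - x^\star \| \leq C\rho^{k+1}$, the shifted rate appearing in the statement. The residual deviation $\| y_{i,k} - \bar{y}_k\|$ is bounded in \eqref{eq:twt_lem2_p4} by $\beta^{t_c} D + 2D$; with the consensus count matched to the iteration this contributes $\beta^{k+1} D + 2D$. Adding the mean-to-optimum and deviation-from-mean terms completes the bound on $\| y_{i,k} - x^\star \|$.

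The argument involves no genuinely hard step, since both ingredients are already in hand; the one point requiring care is the bookkeeping of the consensus indices. Specifically, the deviation estimates of Lemma \ref{lem:twt_bound_dev_mean} were derived for a single iteration and a fixed $t_c$, so I would note that they transfer verbatim to the time-varying schedule \emph{because they are established for each $k$ independently}, and then align the exponent of $\beta$ with the number of consensus steps actually performed when forming $x_{i,k}$ and $y_{i,k}$. Once this index alignment is made explicit, collecting terms and invoking the definition of $\delta$ (as in the proof of Corollary \ref{cor:twt_bound_dist_min}) finishes the proof.
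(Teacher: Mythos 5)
Your proposal is correct and follows essentially the same route as the paper's own proof: a triangle inequality about the mean, Theorem \ref{thm:tw+_rlinear} for $\|\bar{x}_k - x^\star\|$ (with the identity $\bar{y}_k = \bar{x}_{k+1}$ giving the shifted rate $C\rho^{k+1}$), and the deviation bounds \eqref{eq:twt_lem2_p1} and \eqref{eq:twt_lem2_p4} of Lemma \ref{lem:twt_bound_dev_mean} instantiated at $t_c(k)=k$. The only small slip is the closing appeal to the constant $\delta$ from Corollary \ref{cor:twt_bound_dist_min}, which plays no role here---the two terms already combine directly into the stated bounds.
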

\begin{proof}
Using the results from Lemma \ref{lem:twt_bound_dev_mean} and Theorem \ref{thm:tw+_rlinear},
\begin{align*}
	\| x_{i,k} - x^\star \| &\leq \| \bar{x}_k - x^\star \| + \| x_{i,k} - \bar{x}_k\| \\
		& \leq  C\rho^k + \beta^kD,
\end{align*}
and
\begin{align*}
	\| y_{i,k} - x^\star \| &\leq \| \bar{y}_k - x^\star \| + \| y_{i,k} - \bar{y}_k\| \\
		&= \| \ab{\bar{x}_{k+1}} - x^\star \| + \| y_{i,k} - \bar{y}_k\| \\
		& \leq  C\rho^{\ab{k+1}} + \beta^{\ab{k+1}} D  + 2D.
\end{align*}
\end{proof}
This result implies that the local iterates $x_{i,k}$ generated by NEAR-DGD$^{t_c(k),t_g(k)}$ method converge to the optimal solution, whereas the local iterates $y_{i,k}$ do not.}

\ab{We now investigate the work complexity of the method. By work complexity we mean the total amount of work (gradient evaluations and communication steps) required to get an $\epsilon$-accurate solution (i.e., $\| \bar{x}_k - x^\star \| \leq \epsilon$).

	\begin{cor} \label{cor:workcomplexity}\textbf{(Work Complexity)} If the conditions in Theorem \ref{thm:tw+_rlinear} are satisfied, then the work complexity (total number of gradient evaluations $\tau_g$ and rounds of communications $\tau_c$) to get an $\epsilon$-accurate solution (for $\epsilon$ sufficiently small), that is $\| \bar{x}_k - x^\star\| \leq \epsilon$, for the algorithm are given as follows,
		\begin{align*}
		\tau_g = \mathcal{\tilde{O}}\left(\max\left\{\frac{1}{1 - \beta}, \frac{L_{\bar{f}}}{\mu_{\bar{f}}}\right\}\log \left(\frac{1}{\epsilon}\right)\right),\\
		\tau_c = \mathcal{\tilde{O}}\left(\left(\max\left\{\frac{1}{1 - \beta}, \frac{L_{\bar{f}}}{\mu_{\bar{f}}}\right\}\log \left(\frac{1}{\epsilon}\right)\right)^2\right).
		\end{align*}
	\end{cor}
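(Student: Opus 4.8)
The plan is to convert the $R$-linear rate of Theorem~\ref{thm:tw+_rlinear} into an iteration count $K$ and then sum the per-iteration work dictated by the schedules $t_c(k)=k$ and $t_g(k)=\max\{t_g(0)-k,1\}$. First I would determine the smallest $K$ for which the guarantee $\|\bar{x}_K - x^\star\| \leq C\rho^K$ forces $C\rho^K \leq \epsilon$: it suffices that $K \geq \log(C/\epsilon)/\log(1/\rho)$, and since $\log(1/\rho) \geq 1-\rho$ for $\rho\in(0,1)$, we may take
\[
K = \mathcal{O}\!\left(\frac{\log(C/\epsilon)}{1-\rho}\right), \qquad \frac{1}{1-\rho} = \max\left\{\frac{1}{1-\beta},\,\frac{1}{1-\tau},\,\frac{2}{\alpha c_2}\right\},
\]
where the second identity uses $\rho = \max\{\beta,\tau,1-\alpha c_2/2\}$ from Theorem~\ref{thm:tw+_rlinear}, so that $1-\rho = \min\{1-\beta,\,1-\tau,\,\alpha c_2/2\}$.

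Next I would bound the three competing terms in problem parameters. Because $t_g(0)$, and hence $\tau<1$, is a fixed integer independent of $\epsilon$, $\beta$, and the conditioning, the term $1/(1-\tau)$ is $\mathcal{O}(1)$ and gets absorbed. For the remaining term, substituting $c_2 = 2\mu_{\bar{f}}L_{\bar{f}}/(\mu_{\bar{f}}+L_{\bar{f}})$ together with the steplength $\alpha = \min\{1/L,\,c_4\}$, $c_4 = 2/(\mu_{\bar{f}}+L_{\bar{f}})$, yields $2/(\alpha c_2) = \mathcal{O}(L_{\bar{f}}/\mu_{\bar{f}})$. Since $C = \max\{\|\bar{x}_0 - x^\star\|,\, 8(\hat{c}_3+\hat{c}_5)/(\alpha c_2)^2\}$ depends on the problem parameters only polynomially, $\log C$ is polylogarithmic in $1/(1-\beta)$ and $L_{\bar{f}}/\mu_{\bar{f}}$; for $\epsilon$ sufficiently small the $\log(1/\epsilon)$ factor dominates these contributions, which are then swept into the $\tilde{\mathcal{O}}$. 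This produces the iteration count $K = \tilde{\mathcal{O}}\!\left(\max\{1/(1-\beta),\,L_{\bar{f}}/\mu_{\bar{f}}\}\log(1/\epsilon)\right)$.

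The work counts then follow by summing the schedules over $k=0,\dots,K$. For gradients, $t_g(k)=1$ once $k\geq t_g(0)$, so
\[
\tau_g = \sum_{k=0}^{K} t_g(k) = \sum_{k=0}^{t_g(0)-1}\big(t_g(0)-k\big) + \big(K-t_g(0)+1\big) = \mathcal{O}(t_g(0)^2) + \mathcal{O}(K) = \mathcal{O}(K),
\]
recovering the stated $\tau_g$; for communication, $t_c(k)=k$ gives $\tau_c = \sum_{k=0}^{K} k = K(K+1)/2 = \mathcal{O}(K^2)$, squaring the iteration bound to yield the stated $\tau_c$. The main subtlety I expect is the bookkeeping inside the $\tilde{\mathcal{O}}$: one must check that every problem-dependent constant entering through $C$, through $\tau$, and through the conversion $1/\log(1/\rho) \leq 1/(1-\rho)$ contributes only logarithmically (or as a fixed constant), so that the leading behavior is exactly the maximum of the consensus term $1/(1-\beta)$ and the condition-number term $L_{\bar{f}}/\mu_{\bar{f}}$ --- which is precisely what the ``for $\epsilon$ sufficiently small'' hypothesis secures.
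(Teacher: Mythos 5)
Your proposal is correct and follows essentially the same route as the paper's proof: convert the $R$-linear bound of Theorem~\ref{thm:tw+_rlinear} into an iteration count $K = \mathcal{\tilde{O}}\left(\max\left\{\frac{1}{1-\beta}, \frac{L_{\bar{f}}}{\mu_{\bar{f}}}\right\}\log\left(\frac{1}{\epsilon}\right)\right)$, and then sum the schedules $t_g(k)$ and $t_c(k)=k$ to obtain $\tau_g = \mathcal{O}(K)$ and $\tau_c = \mathcal{O}(K^2)$. The only differences are minor refinements of the same argument --- you use the rigorous inequality $\log(1/\rho) \geq 1-\rho$ where the paper approximates $-\log(\rho)\approx 1-\rho$, and you absorb $1/(1-\tau)$ as an $\mathcal{O}(1)$ constant where the paper instead invokes the asymptotic regime in which the effect of $\tau$ vanishes.
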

	\begin{proof}
		For simplicity, we consider the asymptotic complexity where $\epsilon$ is sufficiently small such that the total number of iterations to get an $\epsilon$-accurate solution, $k$, is larger than $t_g(0) - 1$ and the effect of $\tau$ vanishes. By Theorem \ref{thm:tw+_rlinear}, we require
		\begin{align*}
		k \geq \frac{\log(\frac{C}{\epsilon})}{\log(\frac{1}{\rho})}
		\end{align*}
		iterations to get an $\epsilon$-accurate solution $(C\rho^k \leq \epsilon)$. Using the fact that $\alpha < \frac{1}{L}$ and the definition of $c_2$, we have that $1 - \frac{\alpha c_2}{2} \approx 1 - \frac{\mu_{\bar{f}}}{L_{\bar{f}}}$. Now, using this in the definition of $\rho$ (and ignoring the logarithmic dependence on the parameters $C$ and $\tau$) and approximating $-\log(\rho) \approx 1 - \rho$, we have
		\begin{align*}
		k= \mathcal{\tilde{O}}\left(\max\left\{\frac{1}{1 - \beta}, \frac{L_{\bar{f}}}{\mu_{\bar{f}}}\right\}\log \left(\frac{1}{\epsilon}\right)\right).
		\end{align*}
		Since we require $k$ communications at the $k^{th}$ iterate, the total number of communications ($\tau_c$) required is
		\begin{align*}
		\tau_c = \sum_{i=1}^k i = \frac{(k)(k+1)}{2} = \mathcal{O}\left(k^2\right) = \mathcal{O}\left(\left(\log \left(\frac{1}{\epsilon}\right)\right)^2\right).
		\end{align*}
		
		In the asymptotic region we are considering with $k$ larger than $t_g(0) - 1$, using definition of $t_g(k)$, we have,
		\begin{align*}
			\tau_g &= \sum_{i=0}^{t_g(0) - 1} (t_g(0) - i) + \sum_{i=t_g(0)}^{k}1 		\\
			&=k + \frac{t^2_g(0) - t_g(0) + 2}{2}.
		\end{align*} 
		Therefore, for sufficiently small $\epsilon$, we have $\tau_g =\mathcal{O}(k)$ which completes the proof.
			\end{proof}
	We make the following observations about this result. The bound on $\tau_g$ matches with the bound for gradient descent in the centralized setting (which can be viewed as a complete graph with $\beta=0$). As the graph topology changes, the bounds on $\tau_c$ and $\tau_g$ depend on the tradeoff between the condition number $\frac{L_{\bar{f}}}{\mu_{\bar{f}}}$ and the graph dependent parameter $\beta$. 
	
	Similar analysis can be done to show the work complexity required to get an $\epsilon$-accurate solution for the local iterates. Note, that this can only be done for the local iterates $x_{i,k}$, but not the local iterates $y_{i,k}$ as these iterates do not converge. 
}

\section{Numerical Results}
\label{sec:num_res}

In this section, we present numerical results demonstrating the performance of the NEAR-DGD$^{t_c,t_g}$ method, and the effect of performing both multiple consensus and gradient steps. The performance of the methods was evaluated via relative error ($\|\bar{x}_k - x^\star\|^2/\| x^\star \|^2$) in terms of: $(i)$ iterations, $(ii)$ \textit{cost}\footnote{We measure \textit{cost} as proposed in \cite{balCC}; namely, $$\text{Cost} = \# \text{Communications} \times c_c + \# \text{Computations} \times c_g,$$ where $c_c$ and $c_g$ are exogenous application-dependent parameters reflecting the costs of communication and computation, respectively.}, $(iii)$ number of gradient evaluations, and $(iv)$ number of communications. The aim of this section is to show the practical performance of the class of methods and to highlight that the theoretical results are realized in practice.

We investigated the performance of different variants of the NEAR-DGD$^{t_c,t_g}$ on quadratic functions of the form 
\begin{align}	\label{quad_prob}
		 f(x) = \frac{1}{2} \sum_{i=1}^n x^T A_i x + b_i^Tx,
\end{align}
where each node $i=\{1,...,n\}$ has local information $A_i \in \mathbb{R}^{p\times p}$ and $b_i \in \mathbb{R}^{p} $. The problem was constructed as described in \cite{mokhtari2017network}; we considered a $4$-cyclic graph topology (i.e., each node is connected to its $4$ immediate neighbors), we chose the dimension size $p=10$, the condition number ($\kappa =  \frac{L_f}{\mu_f}$) was set to $10^4$ and the number of agents in the network ($n$) was $10$. 

We define variants of the NEAR-DGD$^{t_c,t_g}$ method as NEAR-DGD$^+$ $((g_1,g_2),(c_1,c_2))$, where $g_1$ denotes the initial number of gradient steps and $g_2$ is the interval used for decreasing the number of gradient steps (the minimum number of gradient steps was $1$), and $c_1$ denotes the initial number of consensus steps and $c_2$ describes if/how the number of communication steps was increased. Note, $g_2=``-"$ and/or $c_2=``-"$ indicates that the number of gradient and consensus steps, respectively, was kept constant. \rb{Moreover, NEAR-DGD$^+$$((g_1,g_2),(c_1,k))$ indicates that the number of consensus steps was chosen as $t_c(k)=k$, NEAR-DGD$^+$$((g_1,g_2),(c_1,500+))$ indicates that the initial number of consensus steps was $c_1$ and that the number of consensus steps was increased by $1$ every $500$ iterations, and NEAR-DGD$^+$$((g_1,10-),(c_1,c_2))$ indicates that the initial number of gradient steps was equal to $g_1$ and that the number of gradient steps is reduced by $1$ every $10$ iterations.} The markers in the Figures \ref{fig:ndgd_quad}, \ref{fig:ndgd_quad_diff_cost}, \ref{fig:ndgd_quad_2} and \ref{fig:ndgd_quad_diff_cost_2} are placed every $500$ iterations. In this regard, one can clearly see the effect of the cost per iteration for the different methods.

Figure \ref{fig:ndgd_quad} illustrates the performance of DGD as well as several variant of the NEAR-DGD$^{t_c,t_g}$ method. For this plot, we used $c_c = c_g = 1$. The results  show the rates of convergence and the neighborhoods of convergence of the methods. As predicted by the theory, the methods that do not increase the number of consensus steps converge only to a neighborhood of the solution, whereas methods that increase the number of consensus steps converge to the solution. Moreover, as predicted by the theory, methods that perform multiple gradient steps have a faster initial convergence rate. In terms of iterations, the NEAR-DGD$^+$$((1,-),(1,k))$ method is the fastest. However, this is not the case when comparing the methods in terms of number of communications or cost. This motivated us to investigate practical variant of the methods (see Figures \ref{fig:ndgd_quad_2} and \ref{fig:ndgd_quad_diff_cost_2}).

\begin{figure}[]
\centering

\includegraphics[trim={30 190 60 210},clip,width=0.45\columnwidth]{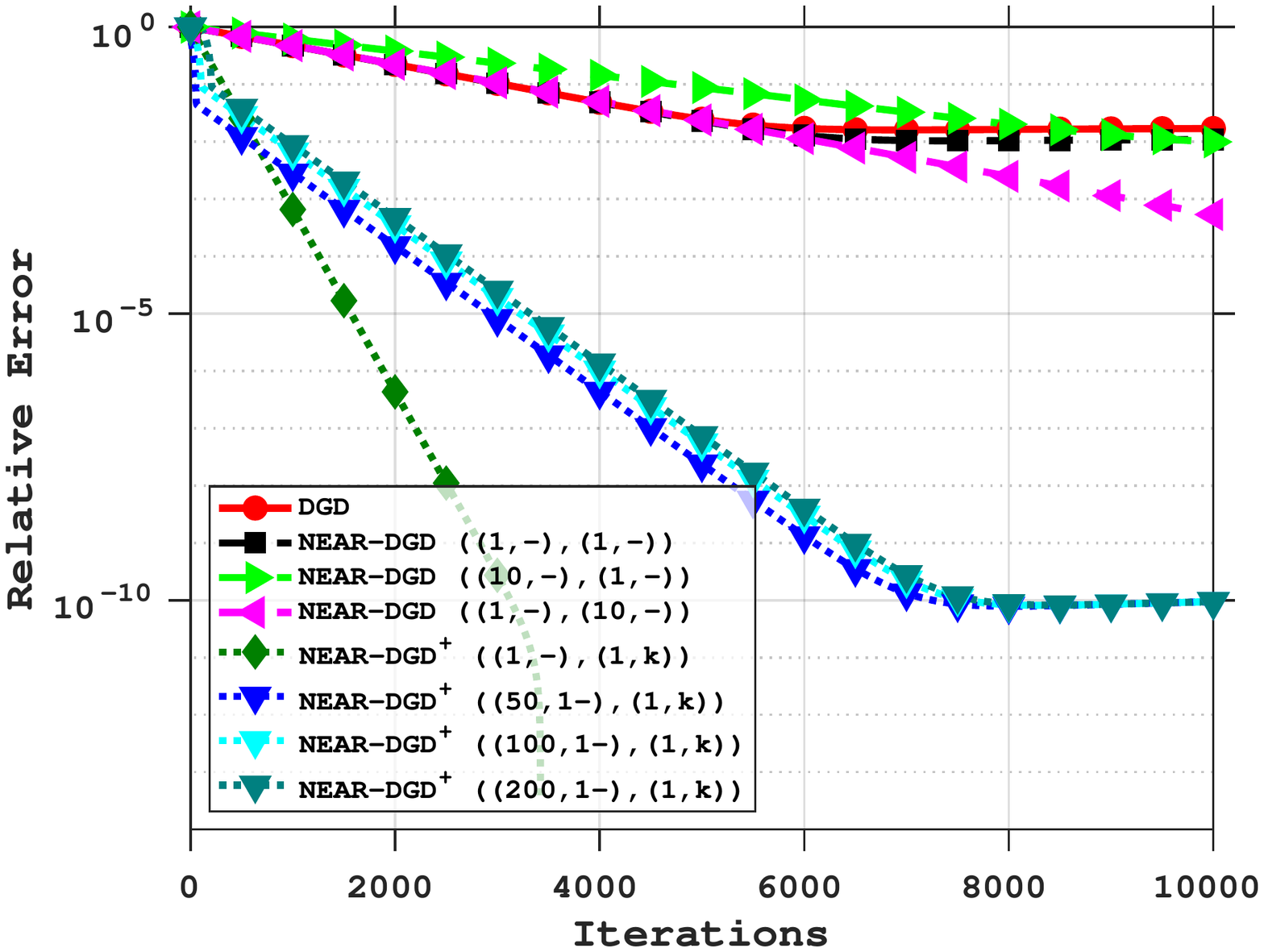}
\hspace{0.1cm}
\includegraphics[trim={30 190 60 210},clip,width=0.45\columnwidth]{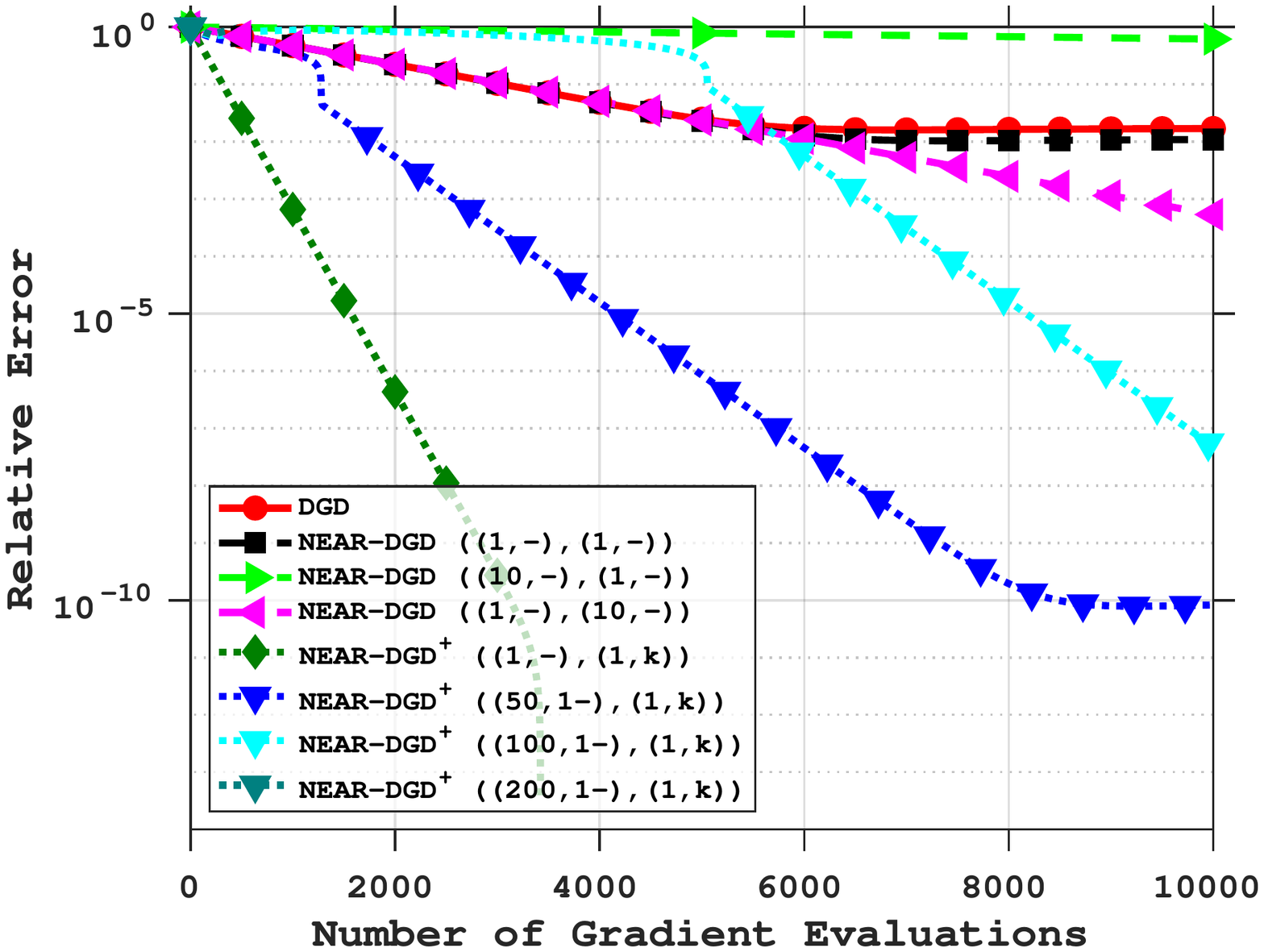}

\vspace{0.2cm}
\includegraphics[trim={30 180 60 210},clip,width=0.45\columnwidth]{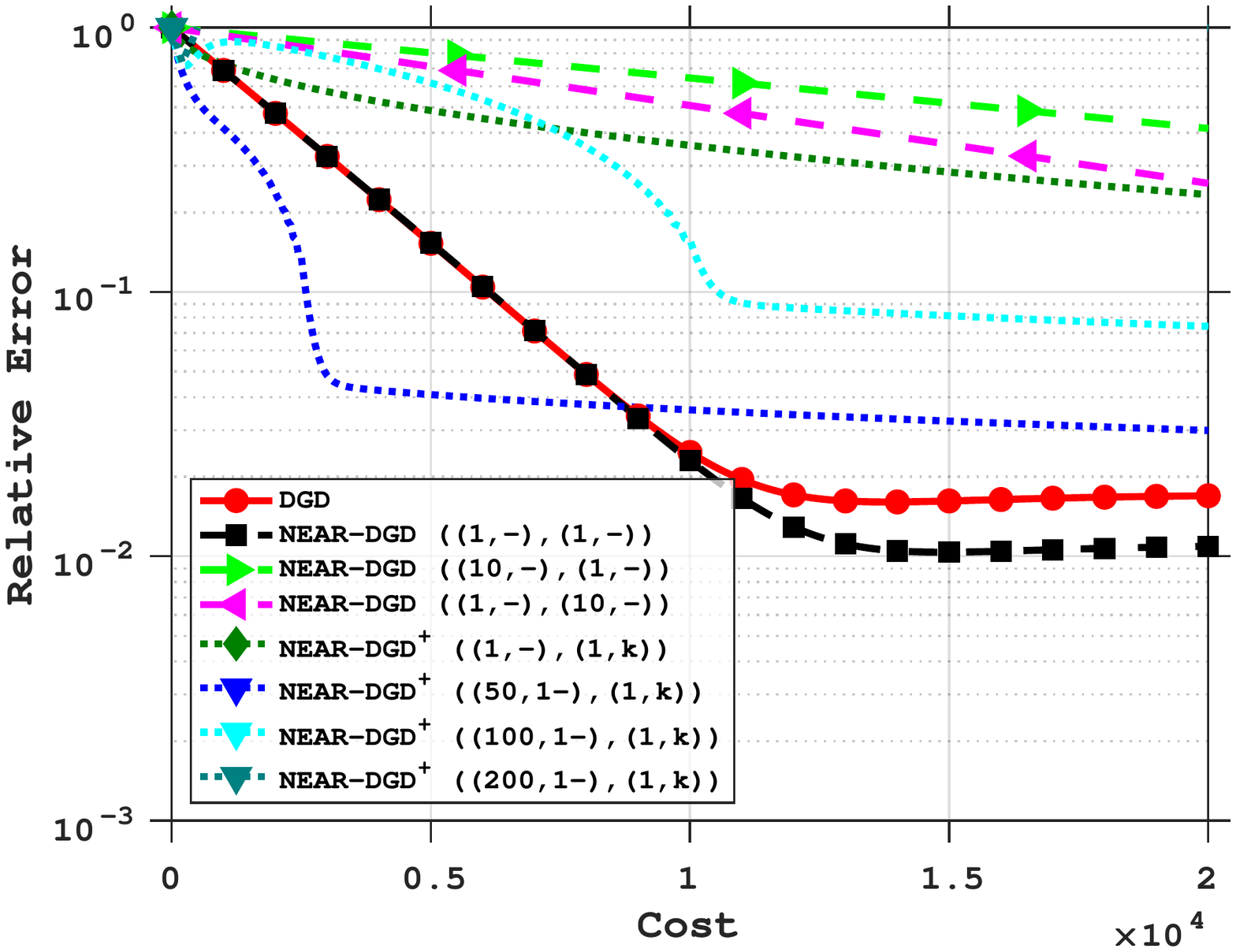}
\hspace{0.1cm}
\includegraphics[trim={30 180 60 210},clip,width=0.45\columnwidth]{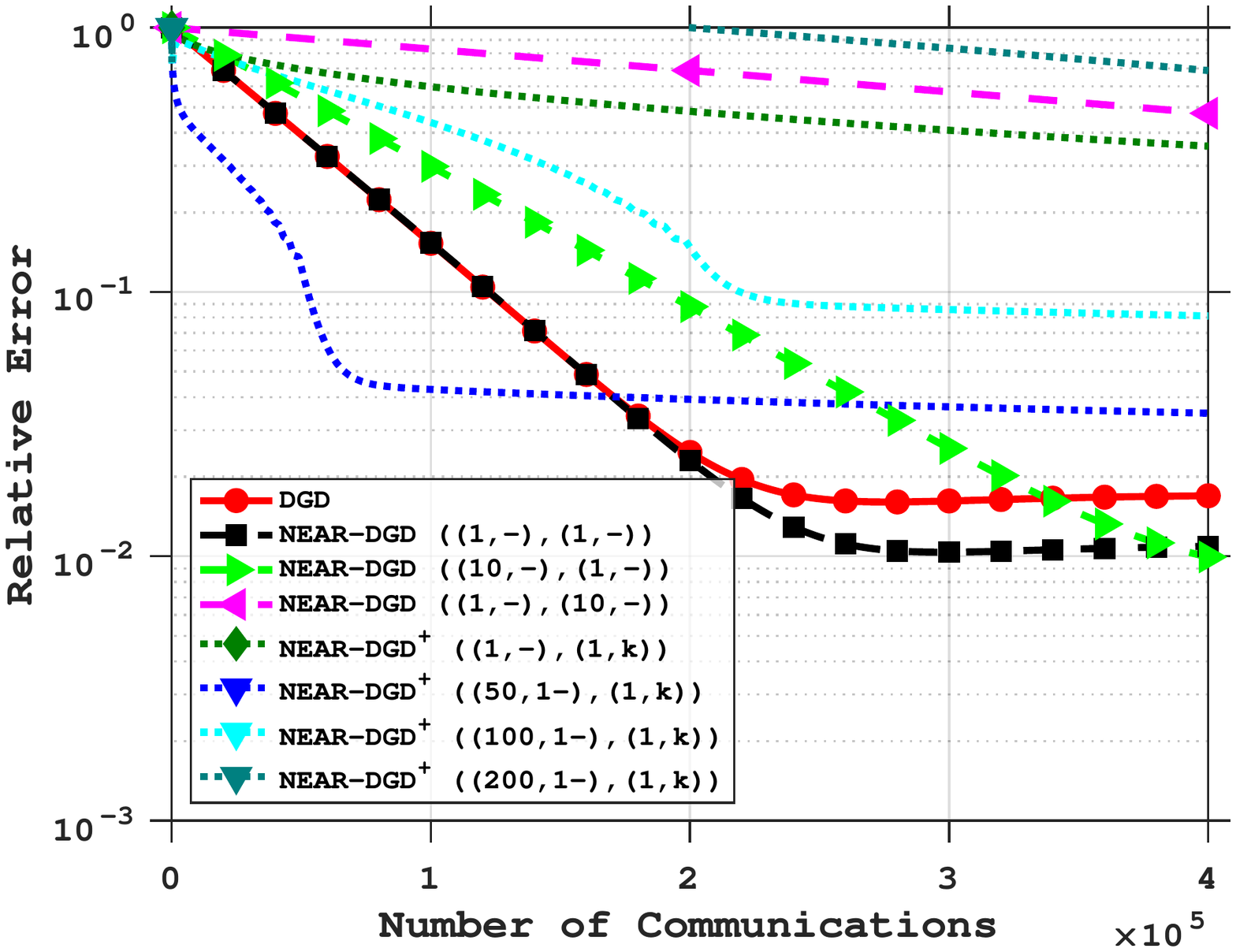}
\caption{ Performance of DGD and NEAR-DGD variants in terms of relative error ($\|\bar{x}_k - x^\star\|^2/\| x^\star \|^2$) with respect to: (i) number of iterations, (ii) number of gradient evaluations, (iii) cost, and (iv) number of communications, on \eqref{quad_prob} ($n = 10$, $p=10$, $\kappa = 10^4$).
}
\label{fig:ndgd_quad}
\end{figure}

\begin{figure}[]
\centering

\includegraphics[trim={30 190 60 210},clip,width=0.32\columnwidth]{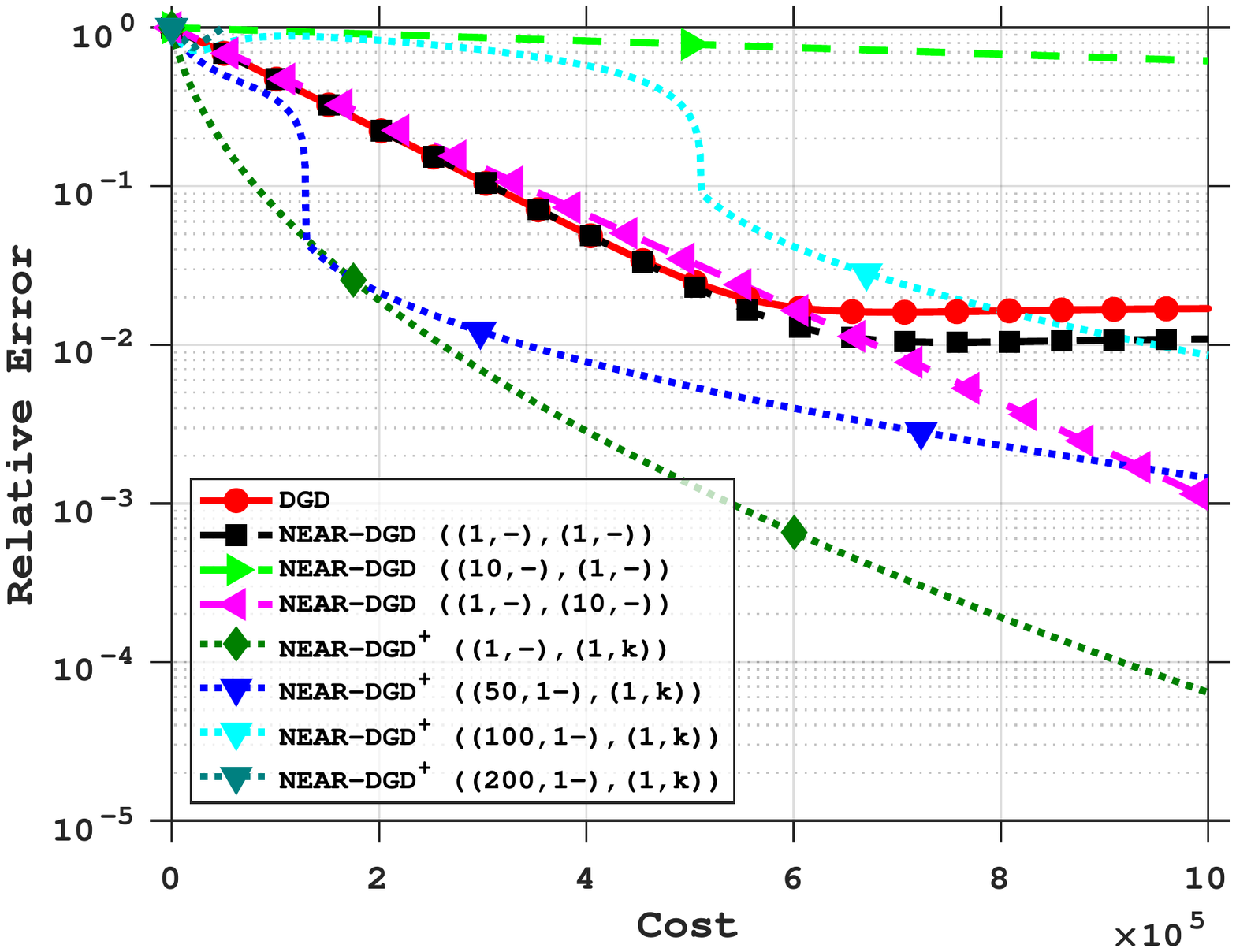}
\includegraphics[trim={30 190 60 210},clip,width=0.32\columnwidth]{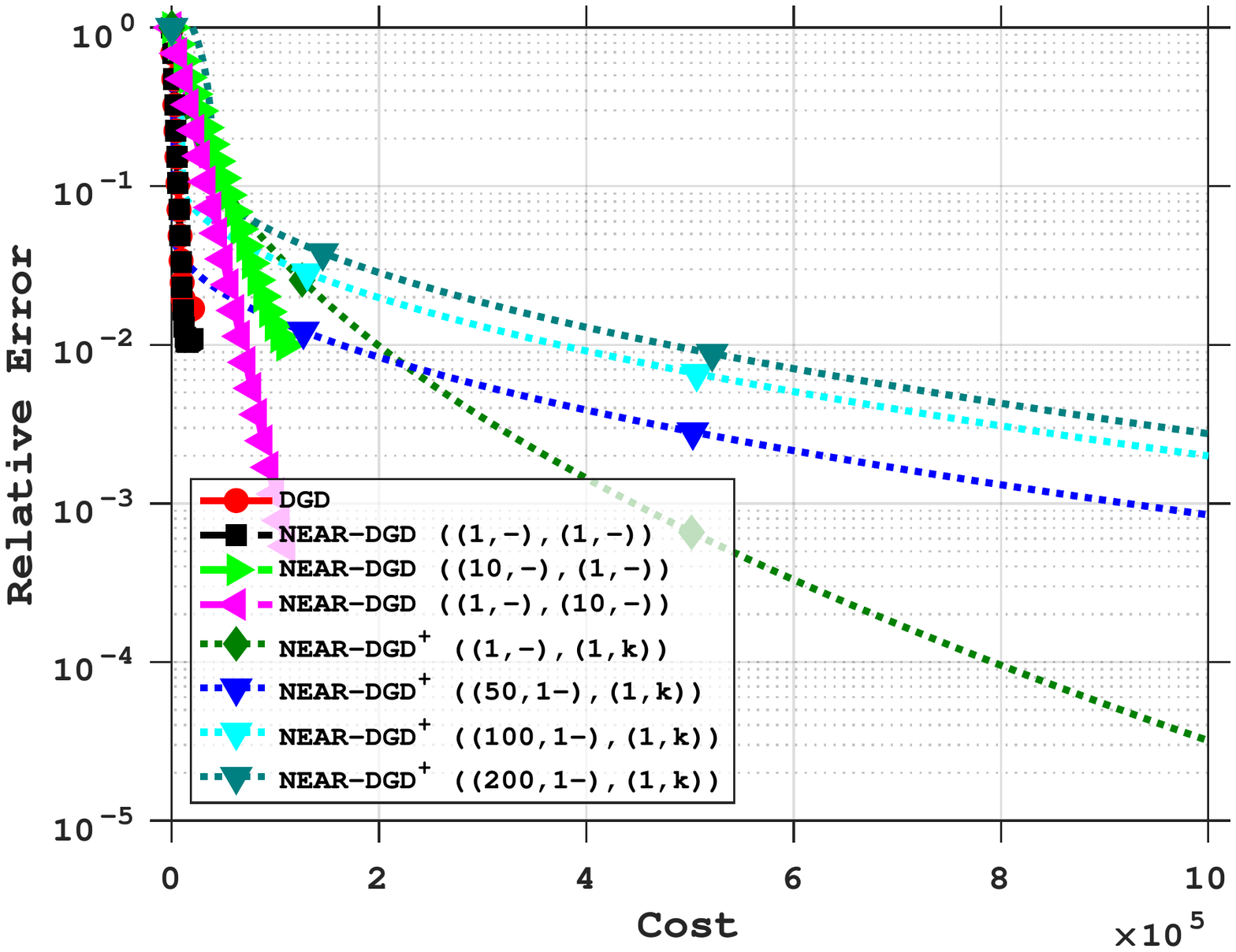}
\includegraphics[trim={30 190 60 210},clip,width=0.32\columnwidth]{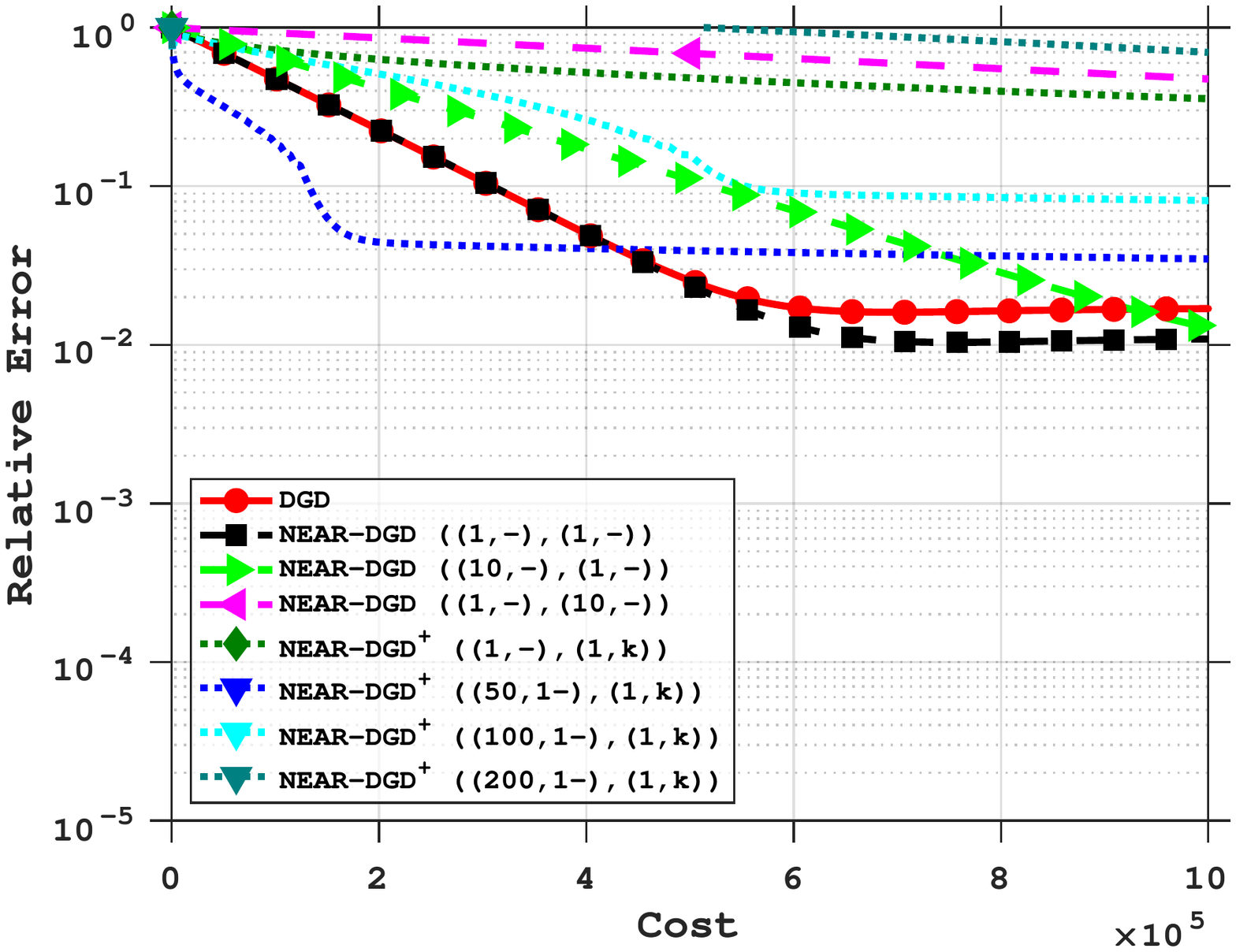}

\caption{Performance of DGD and NEAR-DGD variants in terms of relative error ($\|\bar{x}_k - x^\star\|^2/\| x^\star \|^2$) with respect to different cost structures, on \eqref{quad_prob} ($n = 10$, $p=10$, $\kappa = 10^4$). {\textbf{Left}: $\textcolor{black}{c_{c}}=1$, $\textcolor{black}{c_{g}}=100$;} {\textbf{Center}: $\textcolor{black}{c_{c}}=1$, $\textcolor{black}{c_{g}}=1$;} {\textbf{Right}: $\textcolor{black}{c_{c}}=100$, $\textcolor{black}{c_{g}}=1$.}}
 \label{fig:ndgd_quad_diff_cost}
\end{figure}

In Figure \ref{fig:ndgd_quad_diff_cost}, we illustrate the performance of the methods on three different quadratic problems for different cost structures: (i) $c_c = 1, c_g = 100$; (ii) $c_c = 1, c_g = 1$; (iii) (i) $c_c = 100, c_g = 1$. Each row represents a different problem and each column a different cost structure. As is clear, the performance of the methods is highly dependent on the specific cost structure of the application. When the cost of gradient computations is large as compared to the cost of communications, the NEAR-DGD$^+$$((1,-),(1,k))$ method performs the best. This is not the case when the converse is true ($c_c > c_g$), where the best performing methods appear to be the standard DGD and NEAR-DGD$((1,-),(1,-))$ methods.

\begin{figure}[]
\centering

\includegraphics[trim={30 190 60 210},clip,width=0.45\columnwidth]{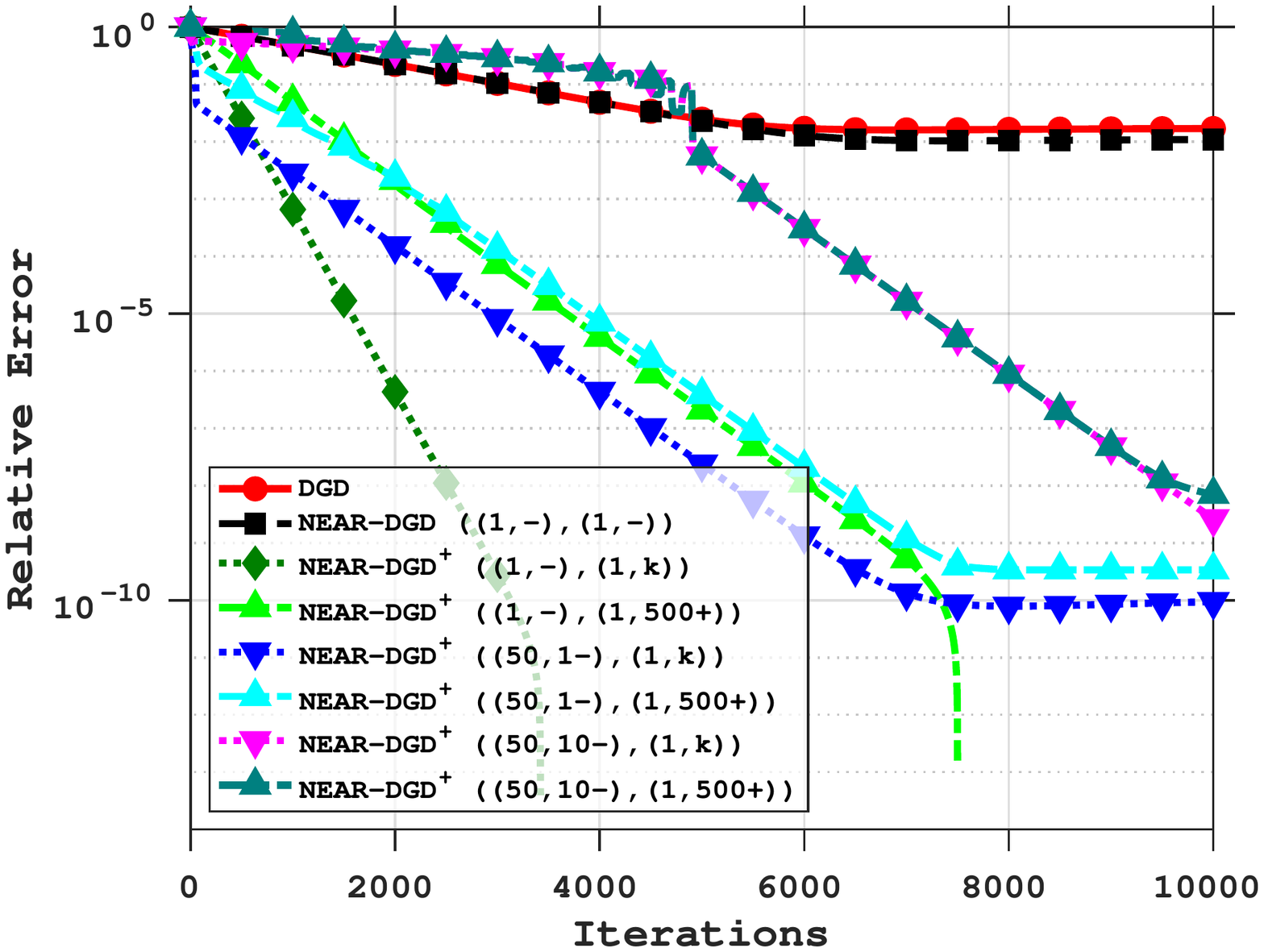}
\hspace{0.1cm}
\includegraphics[trim={30 190 60 210},clip,width=0.45\columnwidth]{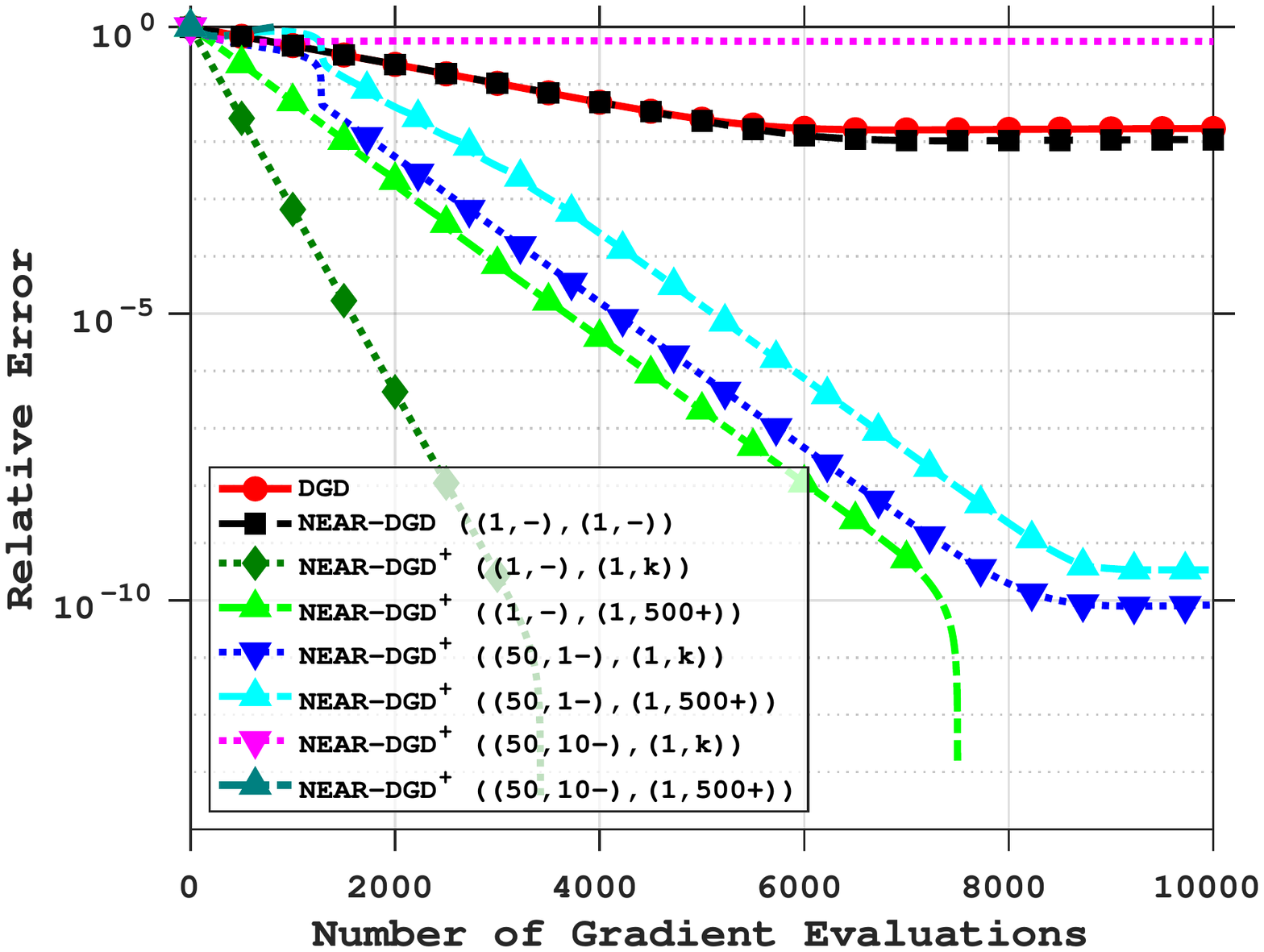}

\vspace{0.2cm}
\includegraphics[trim={30 180 60 210},clip,width=0.45\columnwidth]{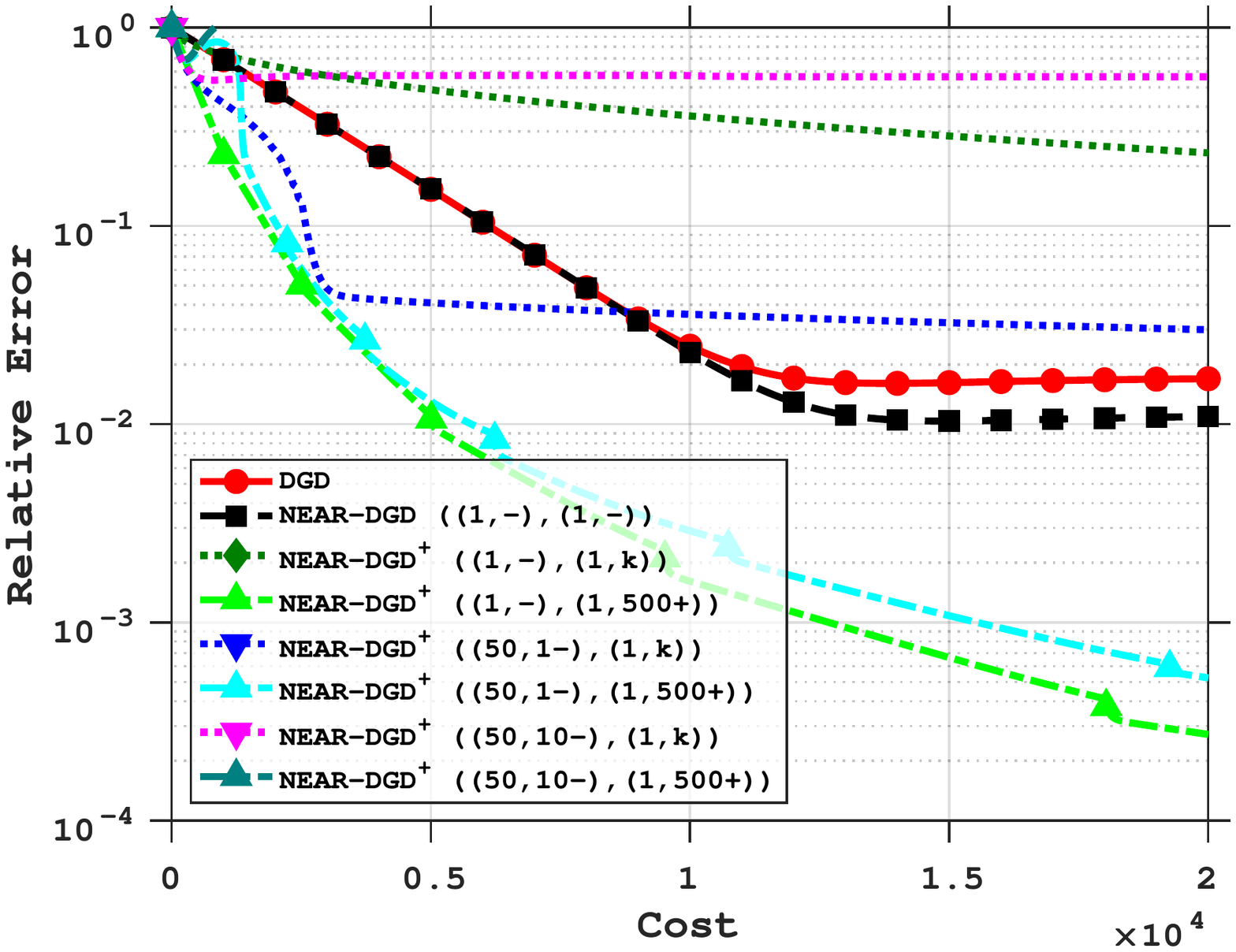}
\hspace{0.1cm}
\includegraphics[trim={30 180 60 210},clip,width=0.45\columnwidth]{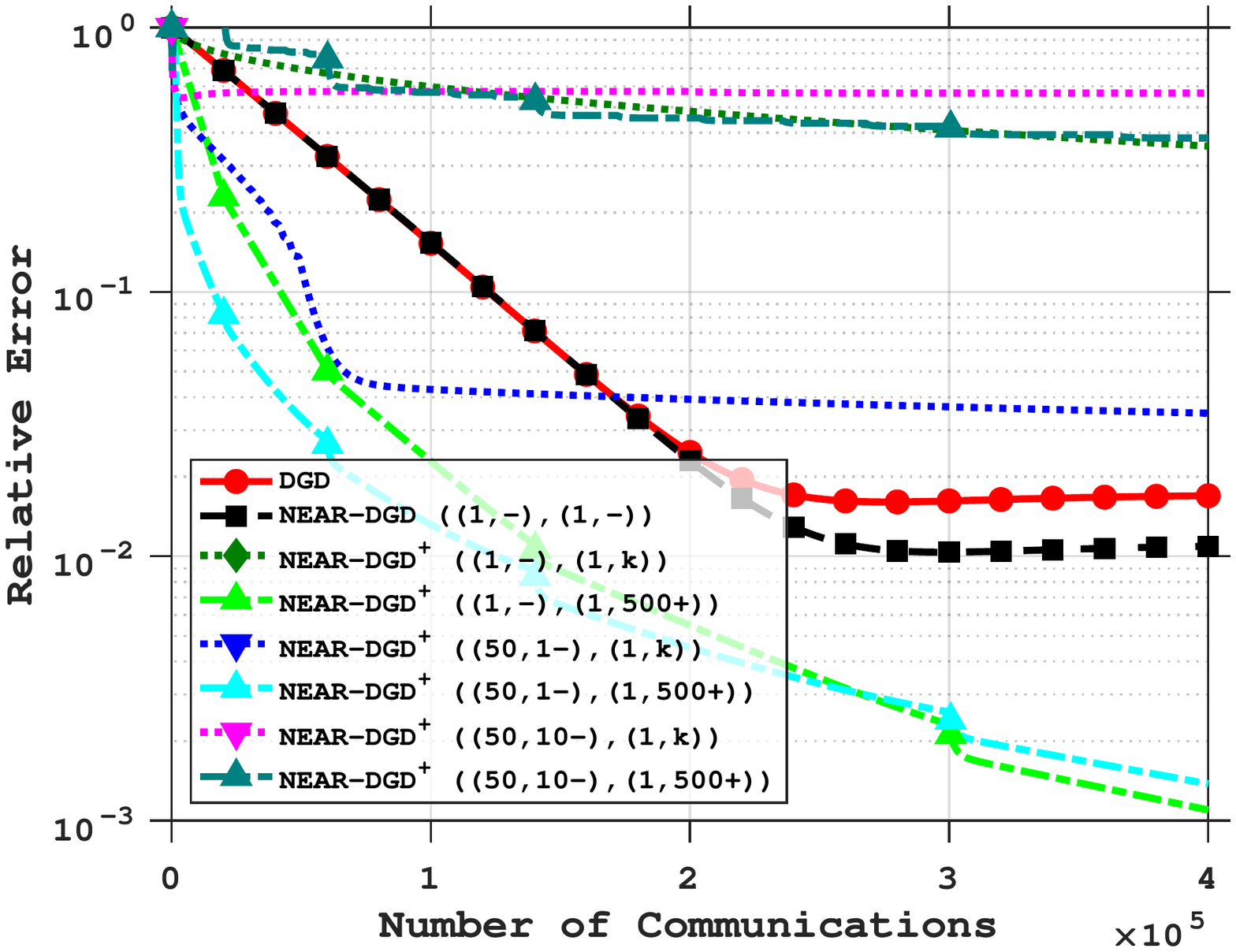}
\caption{ Performance of DGD and practical variants of NEAR-DGD in terms of relative error ($\|\bar{x}_k - x^\star\|^2/\| x^\star \|^2$) with respect to: (i) number of iterations, (ii) number of gradient evaluations, (iii) cost, and (iv) number of communications, on \eqref{quad_prob} ($n = 10$, $p=10$, $\kappa = 10^4$).
}
\label{fig:ndgd_quad_2}
\end{figure}

\begin{figure}[]
\centering

\includegraphics[trim={30 190 60 210},clip,width=0.32\columnwidth]{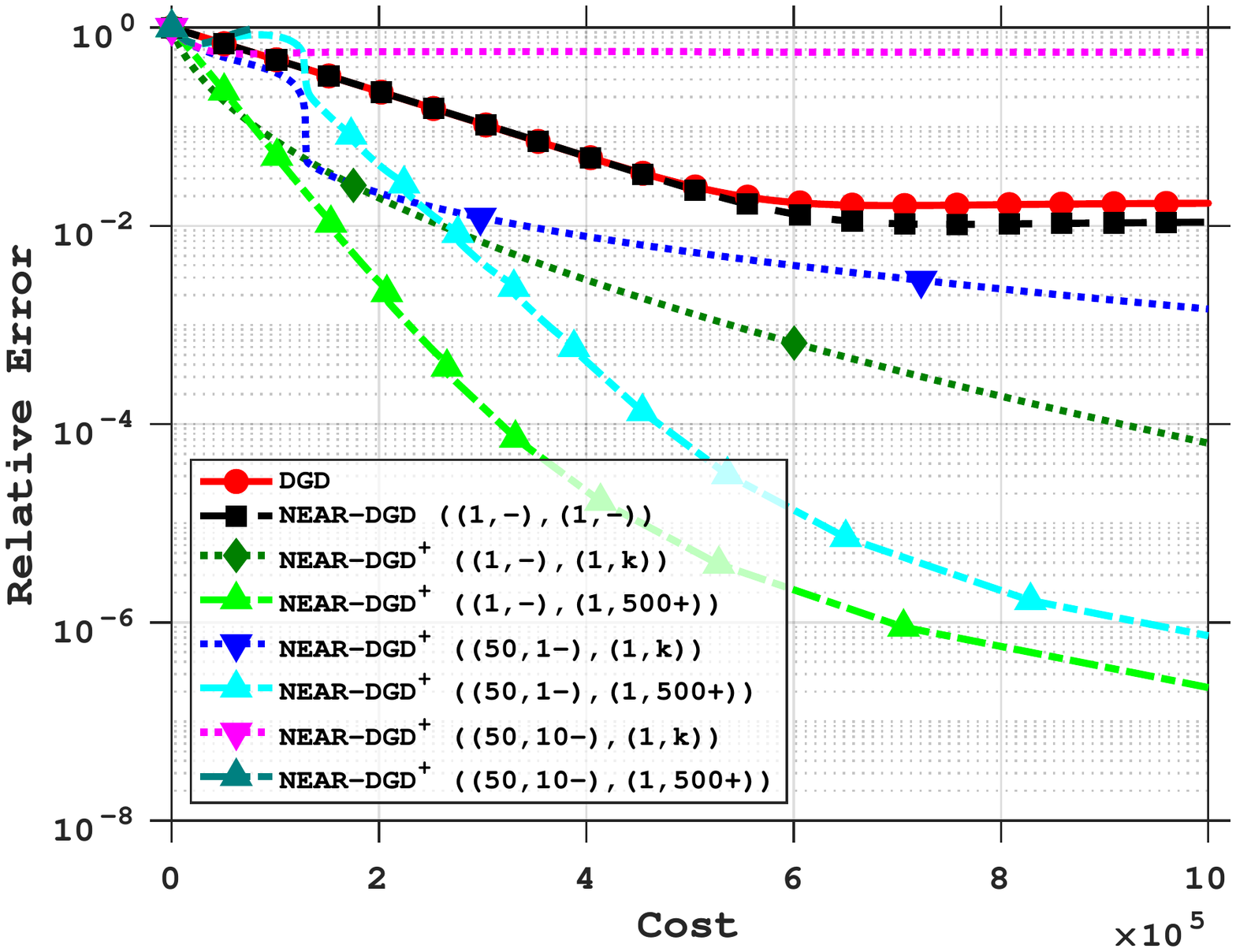}
\includegraphics[trim={30 190 60 210},clip,width=0.32\columnwidth]{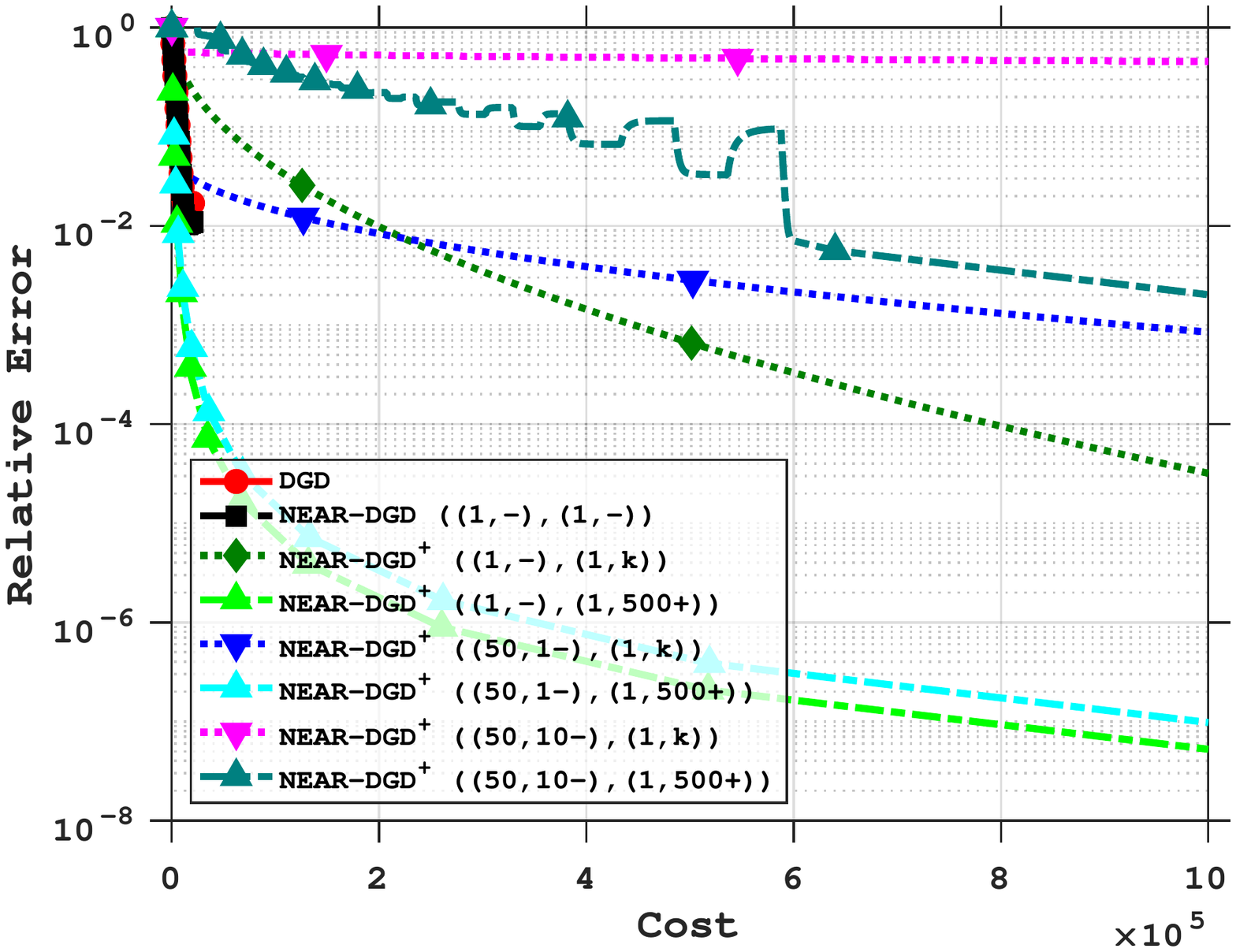}
\includegraphics[trim={30 190 60 210},clip,width=0.32\columnwidth]{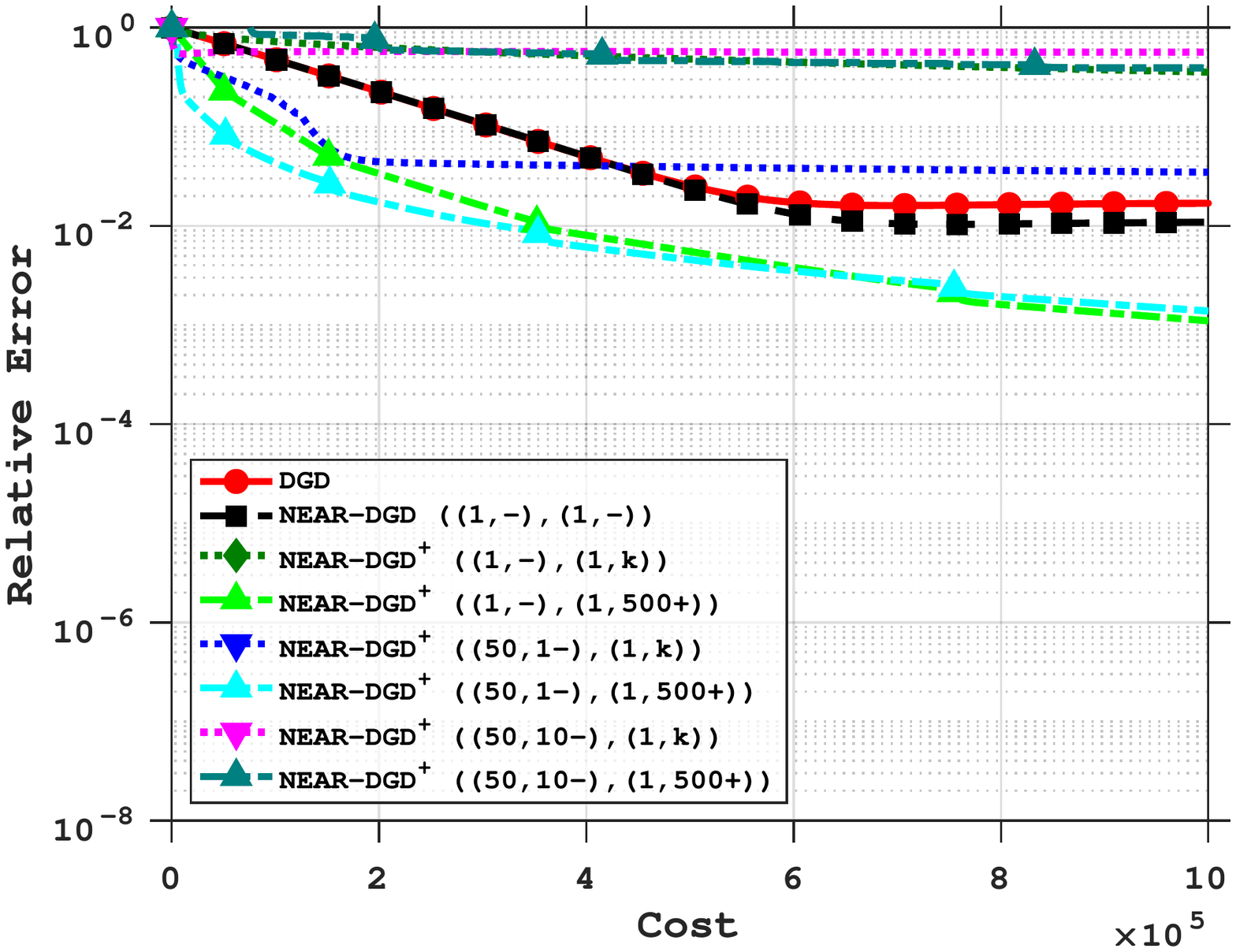}

\caption{Performance of DGD and practical variants of NEAR-DGD in terms of relative error ($\|\bar{x}_k - x^\star\|^2/\| x^\star \|^2$) with respect to different cost structures, on \eqref{quad_prob} ($n = 10$, $p=10$, $\kappa = 10^4$). {\textbf{Left}: $\textcolor{black}{c_{c}}=1$, $\textcolor{black}{c_{g}}=100$;} {\textbf{Center}: $\textcolor{black}{c_{c}}=1$, $\textcolor{black}{c_{g}}=1$;} {\textbf{Right}: $\textcolor{black}{c_{c}}=100$, $\textcolor{black}{c_{g}}=1$.}}
 \label{fig:ndgd_quad_diff_cost_2}
\end{figure}

In Figures \ref{fig:ndgd_quad_2} and \ref{fig:ndgd_quad_diff_cost_2} we investigate the performance of practical variants of the NEAR-DGD$^{t_c,t_g}$ method. Specifically, in these experiments we illustrate the behavior of methods that do not increase the number of communication steps as aggressively, and concurrently do not decrease the number of gradient steps as aggressively. Figure \ref{fig:ndgd_quad_2} we show the performance of the methods in terms of iterations, gradient evaluations, communication and cost (with $c_g = c_c = 1$), and Figure \ref{fig:ndgd_quad_diff_cost_2} we show the performance in terms of cost for three different settings (left: $c_g = 100$, $c_c = 1$; center: $c_g = 1$, $c_c = 1$; right: $c_g = 1$, $c_c = 100$). One can clearly observe from both figures that there are benefits to employing the practical variants of the methods. This is especially apparent in terms of cost for all three different cost structures.

\section{Final Remarks \ab{\& Future Work}}
\label{sec:fin_rem}

Distributed optimization methods that decouple the communication and computation steps have sound theoretical properties and are efficient over a wide variety of distributed optimization problems. The NEAR-DGD method is one such method that performs nested communication and gradient steps at every iteration. In this paper, we generalized the analysis of the NEAR-DGD method to account for both multiple gradient and multiple consensus steps at every iteration. More specifically, we showed both theoretically and empirically the effect of performing multiple gradient steps on the rate of convergence and the size of the neighborhood of convergence, and proved $R$-Linear convergence to the exact solution for a method that performs a decreasing number of gradient steps per iteration and an increasing number of consensus steps. We believe that this analysis completes the picture for the class of NEAR-DGD algorithms, and \ew{provides a theoretical justification for the common practice of using multiple local gradients by the federated learning community. The studies here could also guide the algorithm design choice of the number of communication and gradient steps performed per iteration. Future work includes extensions to directed networks, the setting with stochastic gradient}\ab{, accelerated variants, and the development of schemes that adaptively select the number of communication and computation steps at every iteration depending on the application}.


\ifCLASSOPTIONcaptionsoff
  \newpage
\fi



\bibliographystyle{IEEEtran}
\bibliography{balCC,citationSplitting}
%
%
%

%

\begin{IEEEbiography}[{\includegraphics[width=1in,height=1.25in,clip,keepaspectratio]{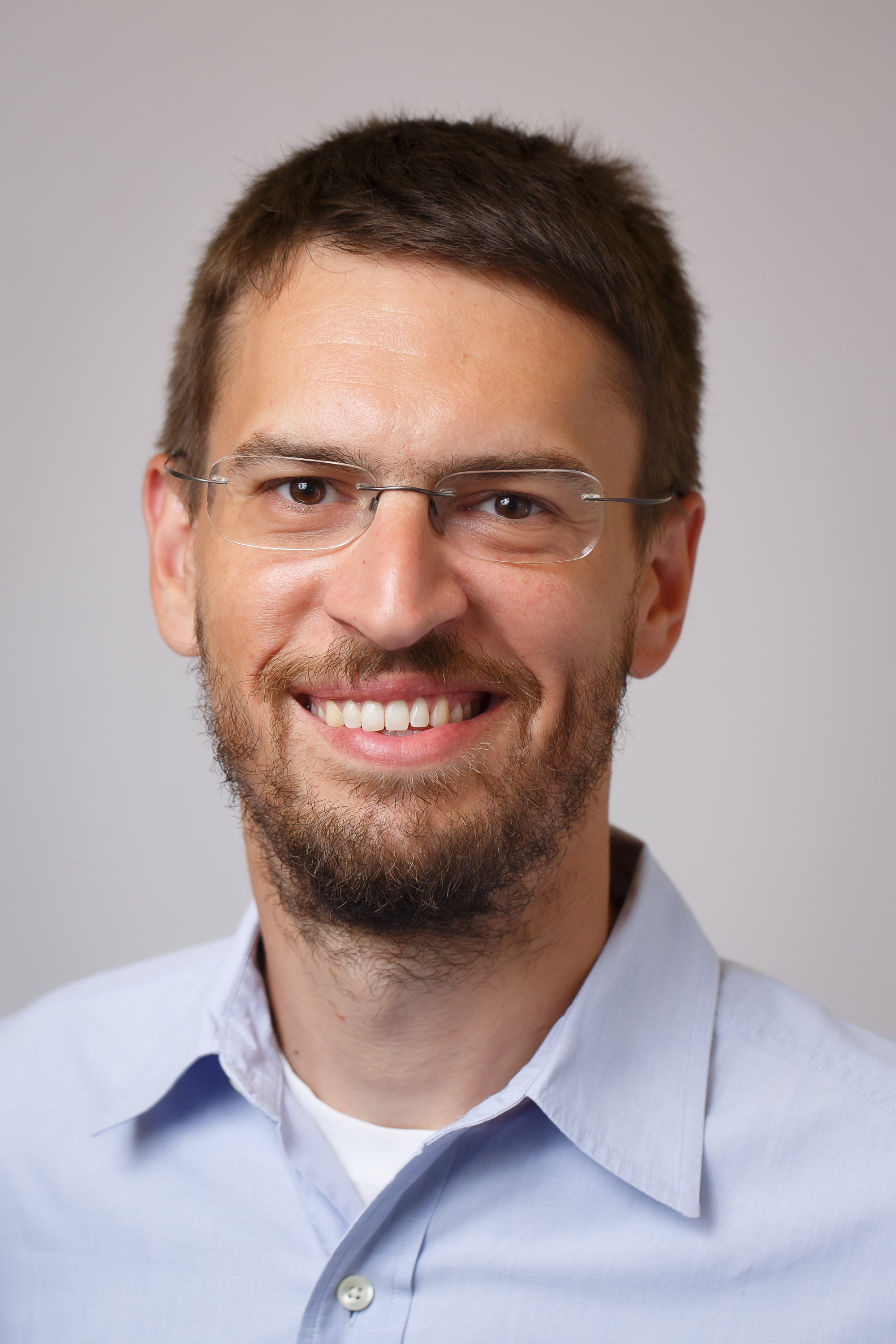}}]{Albert S. Berahas}  is currently an Assistant Professor  in the Industrial and Operations Engineering department at the University of Michigan. Prior to this appointment, he was a Postdoctoral Research Fellow in the ISE Department at Lehigh University and the Industrial Engineering and Management Sciences Department at Northwestern University. He completed his PhD studies in Applied Mathematics at Northwestern University in 2018, advised by Professor Jorge Nocedal. He received his undergraduate degree in Operations Research and Industrial Engineering from Cornell University in 2009, and in 2012 obtained an M.S. degree in Applied Mathematics from Northwestern University. Berahas has received the ESAM Outstanding Teaching Assistant Award, the Walter P. Murphy Fellowship and the John N. Nicholson Fellowship. Berahas' research interests include optimization algorithms for machine learning, convex optimization and analysis, derivative-free optimization and distributed optimization.
\end{IEEEbiography}

\begin{IEEEbiography}[{\includegraphics[width=1in,height=1.25in,clip,keepaspectratio]{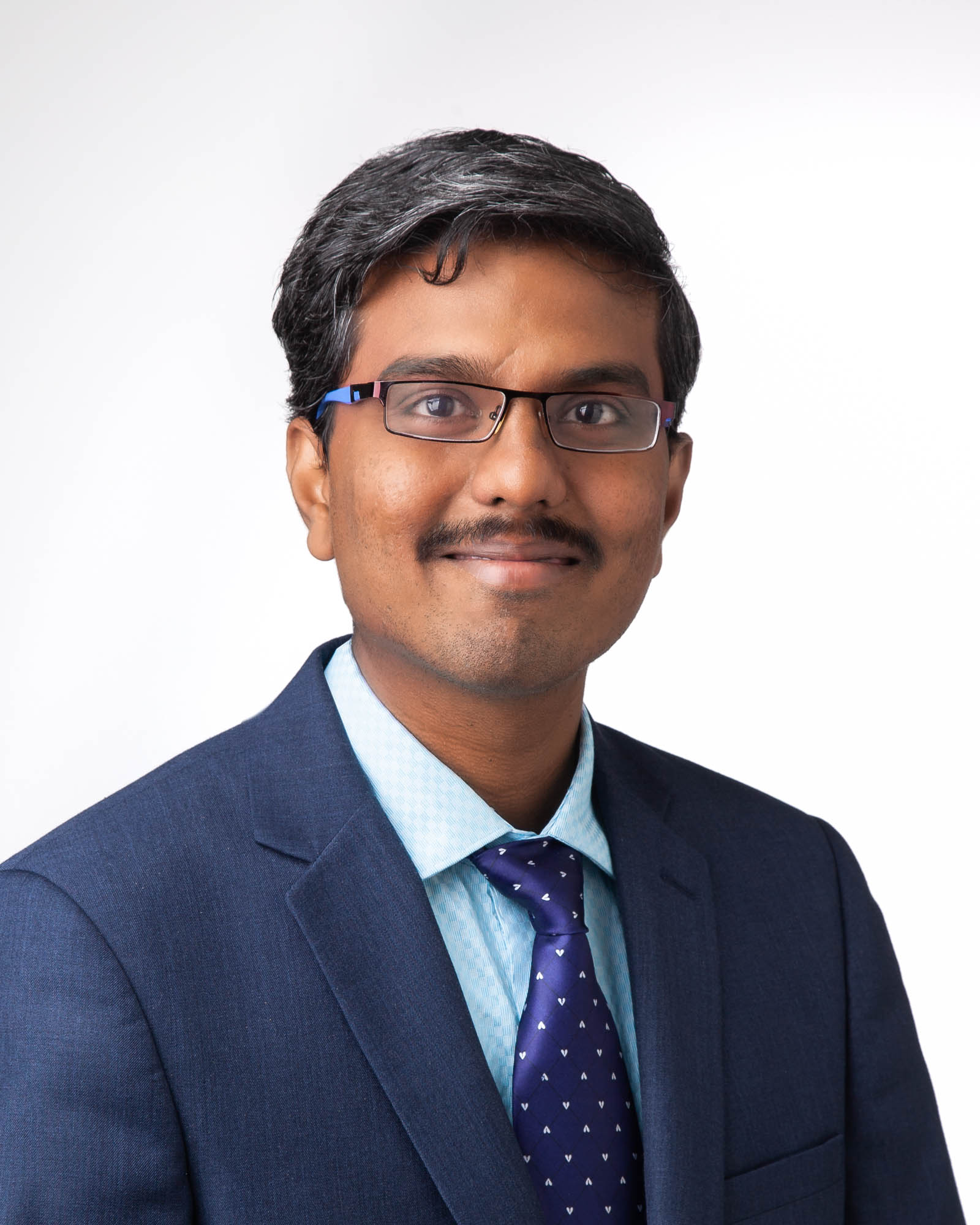}}]{Raghu Bollapragada}  is currently an Assistant Professor in the Operations Research and Industrial Engineering Graduate Program at the University of Texas at Austin. Prior to the appointment, he was a postdoctoral researcher in the Mathematics and Computer Science Division at Argonne National Laboratory. He received his M.S. in 2015 and Ph.D. in 2019 from the Department of Industrial Engineering and Management Sciences at Northwestern University. During his graduate study, he was a visiting researcher at INRIA, Paris. His current research interests include optimization algorithms for machine learning, convex optimization and analysis, stochastic optimization, derivative-free optimization and distributed optimization.  He has received the IEMS Nemhauser Dissertation Award for best dissertation, the IEMS Arthur P. Hurter Award for outstanding academic excellence, the McCormick terminal year fellowship for outstanding terminal-year PhD candidate, and the Walter P. Murphy Fellowship at Northwestern University.
\end{IEEEbiography}


\begin{IEEEbiography}[{\includegraphics[width=1in,height=1.25in,clip,keepaspectratio]{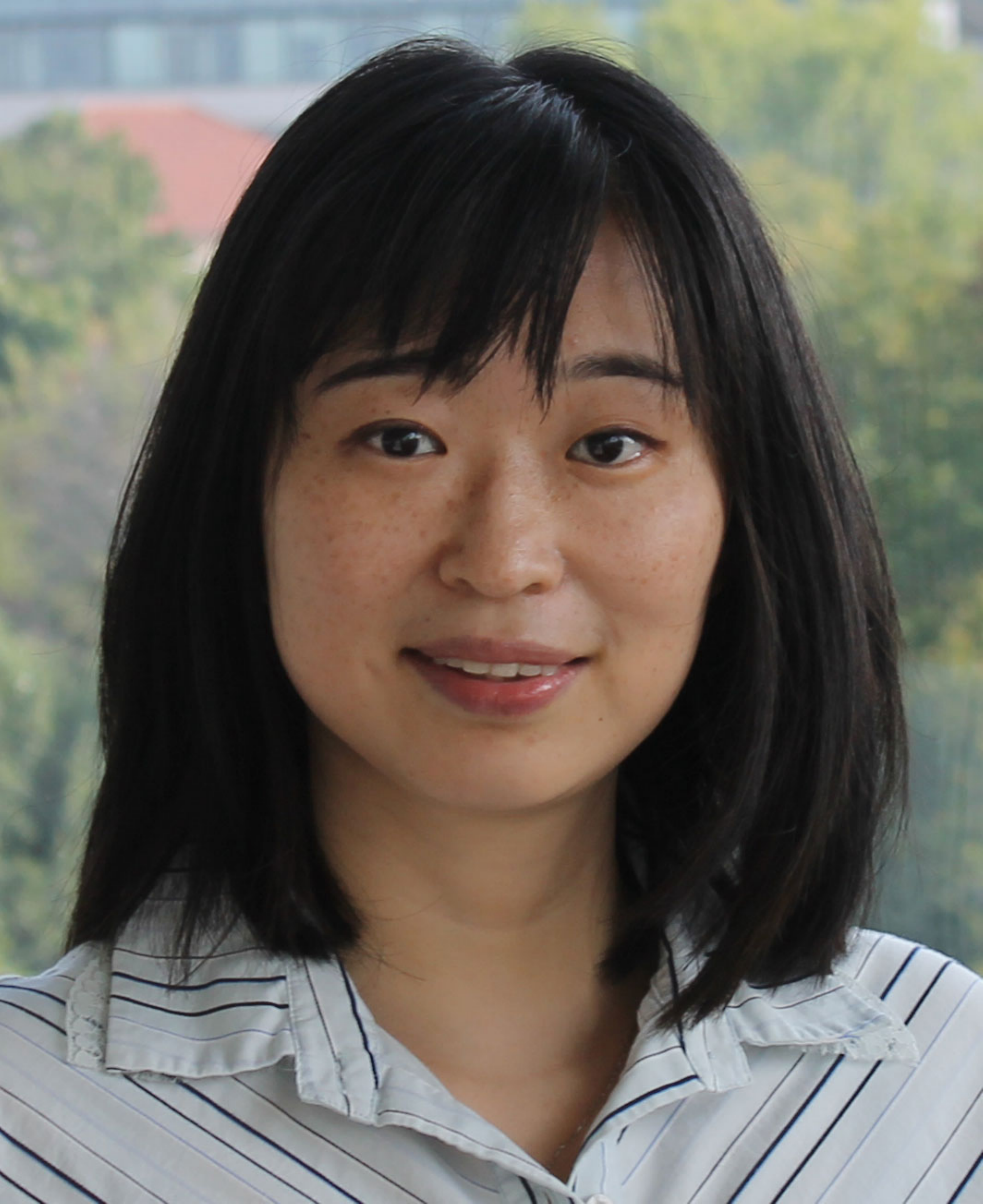}}]{Ermin Wei}  is currently an Assistant Professor at the Electrical and Computer Engineering Department and Industrial Engineering and Management Sciences Department of Northwestern University. She completed her PhD studies in Electrical Engineering and Computer Science at MIT in 2014, advised by Professor Asu Ozdaglar, where she also obtained her M.S.. She received her undergraduate triple degree in Computer Engineering, Finance and Mathematics with a minor in German, from University of Maryland, College Park. Wei has received many awards, including the Graduate Women of Excellence Award, second place prize in Ernst A. Guillemen Thesis Award and Alpha Lambda Delta National Academic Honor Society Betty Jo Budson Fellowship. Wei's research interests include distributed optimization methods, convex optimization and analysis, smart grid, communication systems and energy networks and market economic analysis.
\end{IEEEbiography}




\end{document}